\theoremstyle{plain}
\newtheorem*{syuA}{Main Theorem A}
\newtheorem*{syuB}{Main Theorem B}
\newtheorem{thm}{Theorem}[section]
\newtheorem{lem}[thm]{Lemma}
\newtheorem{cor}[thm]{Corollary}
\newtheorem{cla}[thm]{Claim}
\newtheorem{pro}[thm]{Proposition}
\theoremstyle{definition}
\newtheorem{df}[thm]{Definition}
\newtheorem{rem}[thm]{Remark}
\newtheorem*{prf*}{Proof}
\newtheorem*{pf*}{}
\newtheorem*{lem*}{LemmaA}
\newtheorem*{lm*}{LemmaB}
\newtheorem*{stra*}{Strategy for the proof of main result A}
\def\g2{l\ge2}
\def\la{\lambda}
\def\mdl{\mathcal{L}_d}
\def\m2l{\mathcal{L}_2}
\def\lpf{\underline{P}}
\title[Transversal family of Non-autonomous Conformal Iterated Function Systems]{Transversal family of non-autonomous conformal iterated function systems}
\author{Yuto Nakajima}
\address{Keio Institute of Pure and Applied Sciences (KiPAS)\\ Department of Mathematics,
Keio University, Yokohama,
223-8522, JAPAN}
\email{nakajimayuto@math.keio.ac.jp}
\date{}
\begin{document}
\maketitle
\begin{abstract}
We study Non-autonomous Iterated Function Systems (NIFSs) with overlaps. A NIFS on a compact subset $X\subset\mathbb{R}^m$ is a sequence $\Phi=(\{\phi^{(j)}_{i}\}_{i\in I^{(j)}})_{j=1}^{\infty}$ of collections of uniformly contracting maps $\phi^{(j)}_{i}: X\rightarrow X$, where $I^{(j)}$ is a finite set. In comparison to usual iterated function systems, we allow the contractions $\phi^{(j)}_{i}$ applied at each step $j$ to depend on $j$. In this paper, we focus on a family of parameterized NIFSs on $\mathbb{R}^m$. Here, we do not assume the open set condition. We show that if a $d-$parameter family of such systems satisfies the transversality condition, then for almost every parameter value the Hausdorff dimension of the limit set is the minimum of $m$ and the Bowen dimension. Moreover, we give an example of a family $\{\Phi_t\}_{t\in U}$ of parameterized NIFSs such that $\{\Phi_t\}_{t\in U}$ satisfies the transversality condition but $\Phi_t$ does not satisfy the open set condition for any $t\in U$. 

%We study Non-autonomous Iterated Function Systems (NIFSs) with overlaps. A NIFS $\Phi=(\{\phi^{(j)}_{i}\}_{i\in I^{(j)}})_{j=1}^{\infty}$ on a compact subset $X\subset\mathbb{R}^m$ is a sequence of collections of uniformly contracting maps $\phi^{(j)}_{i}: X\rightarrow X$, where $I^{(j)}$ is a finite set. The system $\Phi$ is an Iterated Function System (IFS) if the collections $\{\phi^{(j)}_{i}\}_{i\in I^{(j)}}$ are independent of $j$. In comparison to usual IFSs, we allow the contractions $\phi^{(j)}_{i}$ applied at each step $j$ to vary as $j$ changes. In this talk, we focus on the family of parameterized NIFSs on $\mathbb{R}^m$. Here, we do not assume the open set condition. We show that if a $d-$parameter family of such systems satisfies the transversality condition, then for almost every parameter value the Hausdorff dimension of the limit set is the minimum of $m$ and the Bowen dimension. Moreover, we give an example of a family $\{\Phi_t\}_{t\in U}$ of parameterized NIFSs such that $\{\Phi_t\}_{t\in U}$ satisfies the transversality condition but $\Phi_t$ does not satisfy the open set condition for any $t\in U$. 

\end{abstract}
\tableofcontents
\section{Introduction}

The aim of this paper is to develop the dimension theory of Iterated Function Systems (IFSs). %In particular, we deal with the following:
%\begin{itemize}
%\item non-autonomous IFSs;
%\item IFSs with overlaps;
%\item transversality methods.
%\end{itemize}
An IFS on a compact subset $X\subset\mathbb{R}^m$ is a collection $\{\phi_1,...,\phi_n\}$  of uniformly contracting maps $\phi_i : X \rightarrow X$. It is well-known that there uniquely exists a non-empty compact subset $A\subset X$ such that $$A=\bigcup_{i=1}^n \phi_i(A),$$ called the limit set of the IFS (\cite{Hut}). 
 In order to analyze the fine-scale structure of the limit set, it is important to estimate the dimension of the limit set. If the conformal IFS satisfies some separating condition, the Hausdorff dimension of the limit set is the zero of the pressure function corresponding to the IFS (see e.g., \cite {MoU, Fal1}). 
 
It is natural to consider a non-autonomous version of the IFS as an application for various problems (see e.g., \cite{WuX17, MaUr22, N}). A Non-autonomous Iterated Function System (NIFS) on a compact subset $X\subset\mathbb{R}^m$ is a sequence $\Phi=(\{\phi^{(j)}_{i}\}_{i\in I^{(j)}})_{j=1}^{\infty}$ of collections of uniformly contracting maps $\phi^{(j)}_{i}: X\rightarrow X$, where $I^{(j)}$ is a finite set. The system $\Phi$ is called an IFS if the collections $\{\phi^{(j)}_{i}\}_{i\in I^{(j)}}$ are independent of $j$. In comparison to usual IFSs, we allow the contractions $\phi^{(j)}_{i}$ applied at each step $j$ to depend on $j$.  %For the dimension theory of NIFSs with some separation condition, see \cite{RU}, \cite{HZ}. 
As a remarkable result for NIFSs, we mention the theory of Non-autonomous Conformal Iterated Function Systems (NCIFSs) which is introduced by Rempe-Gillen and Urba$\acute{\rm n}$ski \cite{RU}. A NCIFS on a compact subset $X\subset\mathbb{R}^m$ is a NIFS which consists of collections of conformal maps $\phi^{(j)}_{i} : X \rightarrow X$ satisfying some mild conditions containing the Open Set Condition (OSC) which is defined as follows. % (we omit to give the rigorous setting). 
 We say that a NIFS $(\{\phi^{(j)}_{i}\}_{i\in I^{(j)}})_{j=1}^{\infty}$ on a compact subset $X$ with ${\rm int}(X)\neq \emptyset$ satisfies the OSC if for all $j\in\mathbb{N}$ and all distinct indices $a,b\in I^{(j)}$, 

\begin{align}
\label{osc}
\phi_a^{(j)}({\rm int}(X))\cap \phi_b^{(j)}({\rm int}(X))=\emptyset.
\end{align}  
Then the limit set of the NCIFS $\Phi=(\{\phi^{(j)}_{i}\}_{i\in I^{(j)}})_{j=1}^{\infty}$ is defined as the set of possible limit points of sequences $\phi^{(1)}_{\omega_1}(\phi^{(2)}_{\omega_2}...(\phi^{(i)}_{\omega_i}(x))...)),$ $\omega_j\in I^{(j)}$ for all $j\in\{1,2,..., i\}$, $x\in X$. The condition (\ref{osc}) imposes restrictions on the overlaps in the limit set of the NCIFS. Moreover, Rempe-Gillen and Urba$\acute{\rm n}$ski introduced {\em the lower pressure function} $\lpf_{\Phi}: [0, \infty)\rightarrow [-\infty, \infty]$ of the NCIFS $\Phi$. Then the {\em Bowen dimension} $s_{\Phi}$ of the NCIFS $\Phi$ is defined by $s_{\Phi}={\rm sup}\{s\ge0\ :\ \underline{P}_{\Phi}(s)>0\}={\rm inf}\{s\ge0\ :\ \underline{P}_{\Phi}(s)<0\}.$
Rempe-Gillen and Urba$\acute{\rm n}$ski proved that the Hausdorff dimension of the limit set is the Bowen dimension of the NCIFS (\cite[1.1 Theorem]{RU}).  For related results for non-autonomous systems satisfying some separating condition, see \cite{HZ, N}.

In this paper, we consider NIFSs in the complicated overlapping case. Here, we do not assume the OSC (\ref{osc}). %In general, it is difficult to estimate the Hausdorff dimension of the limit set of an NIFS in the overlapping case. 
To investigate NIFSs with overlaps, we focus on a family of parameterized NIFSs on $\mathbb{R}^m$ by using the transversality method. The transversality method is utilized for the dimension estimation of the limit sets of parameterized IFSs involving some complicated overlaps (see \cite{PoS, S2, SSU, J, JP}). This method also provides a crucial tool for the absolute continuity of the invariant measures (e.g., Bernoulli convolutions) of IFSs or some variations (see \cite{S1, PS2, SSU2, T}). For some general family of functions with the transversality condition, see  \cite{SSU, MU, SU}. One of the aims in this paper is to provide a non-autonomous version of the classical transversality method (Main Theorem A). Furthermore, we give a family of NIFSs for which we essentially use Main Theorem A to estimate the Hausdorff dimension of limit sets of the NIFSs (Main Theorem B). 

The paper is organized as follows. In section 2 we introduce transversal families of non-autonomous conformal iterated function systems on $\mathbb{R}^m$.  In section 4 we consider some conditions for a deeper understanding of the system given in section 2. Section 4 is devoted to a proof of one of the main results. As preliminaries for the proof, we give some lemma for conformal maps on $\mathbb{R}^m$ and construct a Gibbs-like measure on the symbolic space. Finally, we give the proof by using the transversality method. In section 5 we give an example of a family $\{\Phi_t\}_{t\in U}$ of parameterized NIFSs such that $\{\Phi_t\}_{t\in U}$ satisfies the transversality condition but $\Phi_t$ does not satisfy the open set condition (\ref{osc}) for any $t\in U$.

\section{Main results}
In this section we present a framework of transversal families of non-autonomous conformal iterated function systems and give the main results on them. For each $j\in \mathbb{N},$ let $I^{(j)}$ be a finite set. For any $n, k\in \mathbb{N}$ with $n\le k,$
we introduce index sets
\begin{align*}
&I^k_n:=\prod_{j=n}^k I^{(j)}, I^{\infty}_n:=\prod_{j=n}^{\infty}I^{(j)}, I^n:=\prod_{j=1}^n I^{(j)}, \mbox{and}\ I^{\infty}:=\prod_{j=1}^{\infty}I^{(j)}.
\end{align*}

Let $U\subset\mathbb{R}^d$. For any $t\in U$, let $\Phi_t=(\Phi^{(j)}_t)_{j=1}^{\infty}$ be a sequence of collections of maps on a set $X\subset \mathbb{R}^m$, where $$\Phi^{(j)}_t=\{\phi_{i, t}^{(j)}:X\rightarrow X\}_{i\in I^{(j)}}.$$ 
Let $n, k\in \mathbb{N}$ with $n\le k$. For any $\omega=\omega_n\omega_{n+1}\cdots\omega_k\in I^k_n,$ we set $$\phi_{\omega, t}:=\phi_{\omega_n, t}^{(n)}\circ\cdots\circ \phi_{\omega_{k}, t}^{(k)}.$$
Let $n\in \mathbb{N}$. For any $\omega=\omega_{n}\omega_{n+1}\cdots\in I^{\infty}_n$ and any $j\in \mathbb{N}$, we set $$\omega|_j:=\omega_{n}\omega_{n+1}\cdots\omega_{n+j-1}\in I^{n+j-1}_n.$$ 
Let $V\subset \mathbb{R}^m$ be an open set and let $\phi: V\rightarrow \phi(V)$ be a diffeomorphism. We denote by $D\phi(x)$ the derivative of $\phi$ evaluated at $x$. We say that $\phi$ is {\em conformal} if for any $x\in V$ $D\phi(x):\mathbb{R}^m\rightarrow \mathbb{R}^m$ is a similarity linear map, that is, $D\phi(x)=c_x\cdot A_x$, where $c_x>0$ and $A_x$ is an orthogonal matrix. For any conformal map $\phi: V\rightarrow \phi(V),$ we denote by $|D\phi(x)|$ its scaling factor at $x$, that is, if we set $D\phi(x)=c_x\cdot A_x$ we have $|D\phi(x)|=c_x$. For any set $A\subset V$, we set 
$$||D\phi||_A:={\rm sup}\{|D\phi(x)|\ :\ x\in A\}.$$
%For any set $A\subset \mathbb{R}^m$, we denote by ${\rm cl}(A)$ and ${\rm int}(A)$ the closure of $A$ and the interior of $A$ with respect to the Euclidean topology on $\mathbb{R}^m$ respectively. 
We denote by $\mathcal{L}_d$ the $d$-dimensional Lebesgue measure on $\mathbb{R}^d$. 
%\subsection{Main result}
We introduce {\em the transversal family of non-autonomous conformal iterated function systems} by employing the settings in \cite{RU} and \cite{SSU}.
\begin{df}[Transversal family of non-autonomous conformal iterated function systems]
\label{tncifs}
Let $m\in \mathbb{N}$ and let $X\subset \mathbb{R}^m$ be a non-empty compact convex set. %such that ${\rm cl}({\rm int}(X))=X$ and the boundary $\partial X$ is smooth or $X$ is convex.
Let  $d\in \mathbb{N}$ and let $U\subset \mathbb{R}^d$ be a non-empty open set. For each $j\in \mathbb{N}$, let $I^{(j)}$ be a finite set. Let $t\in U.$ For any $j\in \mathbb{N}$, let $\Phi^{(j)}_t$ be a collection $\{\phi_{i, t}^{(j)}:X\rightarrow X\}_{i\in I^{(j)}}$ of maps $\phi_{i, t}^{(j)}$ on $X$. Let $\Phi_t=(\Phi^{(j)}_t)_{j=1}^{\infty}$. We say that $\{\Phi_t\}_{t\in U}$ is a Transversal family of Non-autonomous Conformal Iterated Function Systems (TNCIFS) if $\{\Phi_t\}_{t\in U}$ satisfies the following six conditions.
\begin{enumerate}
\item[1.] {\em Conformality} : There exists an open connected set $V\supset X$(independent of $i, j$ and $t$) such that for any $i, j$ and $t\in U$, $\phi_{i, t}^{(j)}$ extends to a $C^{1}$ conformal map on $V$ such that $\phi_{i, t}^{(j)}(V)\subset V$.
\item[2.] {\em Uniform contraction} : There is a constant $0< \gamma <1$ such that for any $t\in U,$ any $n\in \mathbb{N}$, any $\omega\in I_n^{\infty}$ and any  $j\in \mathbb{N}$, 
\begin{align*}
|D\phi_{\omega|_j, t}(x)|\le\gamma^j
\end{align*}
for any $x\in V\supset X.$
\item[3.] {\em Bounded distortion} : There exists a continuous %Borel measurable locally bounded 
function $K: U\rightarrow [1, \infty)$ such that for any $t\in U,$ any $n\in \mathbb{N}$, any $\omega\in I_n^{\infty}$ and any $j\in \mathbb{N},$

\begin{align}
\label{bd}
|D\phi_{\omega|_j, t}(x_1)|\le K(t)|D\phi_{\omega|_j, t}(x_2)|
\end{align}
for any $x_1,x_2\in V$. %Here, $|D\phi_t(x)|$ denotes the norm of the derivative of $\phi_t$ at $x\in V$ with respect to the Euclidean metric on $\mathbb{R}^m$.
\item[4.] {\em Distortion continuity} : For any $\eta>0$ and $t_0\in U$, there exists $\delta=\delta(\eta, t_0)>0$ such that for any $t\in U$ with $|t-t_0|\le \delta$, for any $n, j\in \mathbb{N},$ and for any $\omega\in I_n^{\infty},$
\begin{align} 
\label{dc} 
\exp(-j \eta)\le \frac{||D\phi_{\omega|_j, t_0}||_X}{||D\phi_{\omega|_j, t}||_X}\le \exp(j \eta).
\end{align}

We now define the {\em address map} as follows. Let $t\in U$. For all $n\in \mathbb{N}$ and all $\omega\in I^{\infty}_n$, $$\bigcap_{j=1}^{\infty}\phi_{\omega|_{j}, t}(X)$$ is a singleton by the uniform contraction property. It is denoted by $\{x_{{\omega}, n, t}\}$. The map $$\pi_{n, t}\colon I^{\infty}_n \rightarrow X$$ is defined by $\omega \mapsto x_{{\omega}, n, t}$. Then $\pi_{n, t}$ is called {\em the $n$-th address map corresponding to $t$}. 
Note that for any $t\in U$ and $n\in \mathbb{N}$ the map $\pi_{n, t}$ is continuous with respect to the product topology on $I^{\infty}_n$.
\item[5.] {\em Continuity} : Let $n\in \mathbb{N}$. The function $U \times I^{\infty}_n\ni(t, \omega)\mapsto \pi_{n, t}(\omega)$ is continuous.% For any $\epsilon>0$ and $x\in X$, there exists $\delta>0$ such that for any $t_1, t_2\in U$ with $|t_1-t_2|< \delta,$ any $n, j\in \mathbb{N}$ and any $\omega\in I_n^{\infty},$
%$$|\phi_{\omega|_j, t_1}(x)-\phi_{\omega|_j, t_2}(x)|< \epsilon.$$ %In particular, for any $\omega\in I_{n}^{n+j-1}$ and any $x\in X$, the map $t\mapsto \phi_{\omega, t}(x)=\phi_{\omega_n, t}^{(n)}\circ\cdots\circ \phi_{\omega_{n+j}, t}^{(n+j)}(x)$ is continuous on $U$.

\item[6.] {\em Transversality condition} : For any compact subset $G\subset U$ there exists a sequence $\{C_n\}_{n=1}^{\infty}$ of positive constants with
$$\lim_{n\to \infty}\frac{\log C_n}{n}=0$$
 such that for all $\omega,\tau\in I_n^{\infty}$ with $\omega_n\neq \tau_n$ and for all $r>0$,   

\begin{align*}
\mathcal{L}_d\left(\{t\in G\ :\ |\pi_{n, t}(\omega)-\pi_{n, t}(\tau)|\le r\}\right)\le C_n r^m.
\end{align*}
\end{enumerate}
\end{df}
\begin{rem}
\label{rem1}
\begin{enumerate}
\item[(i)] If $m\ge 2,$ the conformality condition implies the bounded distortion condition. For the details, see \cite[page. 1984 Remark]{RU}. 
%\item[(ii)]There exists a family $\{\Phi_t\}_{t\in U}$ of parameterized NIFSs such that $\{\Phi_t\}_{t\in U}$ does not satisfy the distortion continuity (see ).
\item[(ii)]Let $n\in \mathbb{N}$ and let $t\in U$. Then for any $\omega\in I_{n}^{\infty},$ we have
$$\pi_{n, t}(\omega)=\lim_{j\to \infty} \phi_{{\omega|_j}, t}(x),$$
where $x\in X.$
\begin{proof}
Since $\phi_{{\omega|_{j+k}}, t}(x)=\phi_{{\omega|_{j}}, t}(\phi_{{\omega_{j+1}\cdots \omega_{j+k}}, t}(x))\in \phi_{\omega|_{j}, t}(X)$ for any $j, k\in \mathbb{N},$ we have $\lim_{j\to \infty} \phi_{{\omega|_j}, t}(x)=\lim_{k\to \infty} \phi_{{\omega|_{j+k}}, t}(x)\in \phi_{\omega|_{j}, t}(X)$ for any $j\in \mathbb{N}.$ Hence $\lim_{j\to \infty} \phi_{{\omega|_j}, t}(x)=\pi_{n, t}(\omega).$
\end{proof}

\item[(iii)]In the case of usual IFSs, the constants $C_n$ in the transversality condition are independent of $n$ since the $n$-th address maps $\pi_{n, t}$ are independent of $n$. 
\end{enumerate}
\end{rem}
%\begin{rem}
%\label{rem2}
%For any $\omega\in I^{\infty}_n$, the function $t\mapsto \pi_{n, t}(\omega)$ is continuous on $U$.
%\end{rem}
%\begin{proof}
%Let $x\in X$. Fix $\epsilon>0$. By the continuity condition, there exists $\delta>0$ such that for any $\omega\in I_{n}^{n+j}$ and for any $t_1, t_2\in U$ with $|t_1-t_2|< \delta,$
%$$|\phi_{\omega_n, t_1}^{(n)}\circ\cdots\circ \phi_{\omega_{n+j}, t_1}^{(n+j)}(x)-\phi_{\omega_n, t_2}^{(n)}\circ\cdots\circ \phi_{\omega_{n+j}, t_2}^{(n+j)}(x)|< \epsilon.$$Letting $j\to \infty$, we have that
%$$|\pi_{n, t_1}(\omega)-\pi_{n, t_2}(\omega)|< \epsilon.$$
%\end{proof}
In Section 3, we give some further conditions under which the conditions 3, 4, 5 in Definition \ref{tncifs} hold. 

Let $\{\Phi_t\}_{t\in U}$ be a TNCIFS. 
%For any $n\in\mathbb{N}$ and $t\in U$, {\em the $n$-th limit set} $J_{n, t}$ of $\Phi_t$ is defined by
%We now investigate the Hausdorff dimension of the limit set. 
For any $t\in U$, we define the lower pressure function $\underline{P}_t: [0, \infty)\rightarrow [-\infty, \infty]$ of $\Phi_t$ as follows. For any $s\ge 0$ and $n\in \mathbb{N}$, we set
\begin{align}
\label{PF1}
Z_{n, t}(s):=\sum_{\omega\in I^n}(||D\phi_{\omega, t}||_X)^s,
\end{align}
and 
\begin{align}
\label{PF2}
\underline{P}_t(s):=\liminf\limits_{n\rightarrow \infty}\frac{1}{n}\log Z_{n, t}(s)\in[-\infty,\infty].
\end{align}
By \cite[Lemma 2.6]{RU}, the lower pressure function has the following monotonicity. If $s_1<s_2$, then either both $\underline{P}_t(s_1)$ and $\underline{P}_t(s_2)$ are equal to $\infty$, both are equal to $-\infty$, or $\underline{P}_t(s_1)>\underline{P}_t(s_2)$. Then for any $t\in U$, we set
\begin{align}
\label{PF3}
s(t)=s(\Phi_t):={\rm sup}\{s\ge0\ :\ \underline{P}_t(s)>0\}={\rm inf}\{s\ge0\ :\ \underline{P}_t(s)<0\},
\end{align}
where we set ${\rm sup}\ \emptyset=0$ and ${\rm inf}\ \emptyset=\infty.$ The value $s(t)$ is called the Bowen dimension of $\Phi_t$. 
%\begin{rem}
%\label{rempres}
%Fix $t\in U.$ If the NIFS $\Phi_t$ satisfies the OSC (\ref{osc}), we have $s(t)\le m$. However, it is possible that the Bowen dimension of some NIFS will be infnity in the overlapping case. Indeed, there exists an NIFS $\Phi$ such that $s(\Phi)=\infty$ as follows. For each $j\in \mathbb{N},$ let $I^{(j)}=\{1,..., 2^j\}.$ For each $i\in I^{(j)},$ we define a map $\phi^{(j)}_i$ on $X=[0, 1]$ by \[\phi^{(j)}_i(x)=2^{-1} x+i^{-1}.\] Let $\Phi=(\{\phi^{(j)}_i\}_{i\in I^{(j)}})_{j=1}^{\infty}.$ Then the lower pressure function $\underline{P}(s)$ of $\Phi$ satisfies that for any $s\ge 0,$
%\begin{align*}
%\underline{P}(s)=\liminf\limits_{n\rightarrow \infty}\frac{1}{n}\log \sum_{\omega\in I^n}(||D\phi_{\omega}||_X)^s&=\liminf\limits_{n\rightarrow \infty}\frac{1}{n}\log \left\{\prod_{j=1}^n 2^j \frac{1}{2^{ns}}\right\}\\&=\liminf\limits_{n\rightarrow \infty}\frac{1}{n}\left\{\frac{\log 2}{2}n(n+1)-s\log2\right\}=\infty.
%\end{align*}
%Hence $s(\Phi)=\infty.$
%\end{rem}
We define the limit set $J_t$ of $\Phi_t$ by 
\begin{align*}
J_{t}:=\pi_{1, t}(I^{\infty}).
\end{align*}
\begin{rem}
The Hausdorff dimension function $t\mapsto \dim_H(J_t)$ is Borel measurable by the continuity condition. To see this, let $t_0\in U, \omega\in I^{\infty}.$ By the continuity condition, for any $\epsilon>0,$ there exists a neighborhood $N_{t_0, \omega}\subset U$ of $t_0$ and a neighborhood $M_{t_0, \omega}\subset I^{\infty}$ of $\omega$ such that for any $(t, \tau)\in N_{t_0, \omega}\times M_{t_0, \omega},$ $|\pi_{1, t}(\tau)-\pi_{1, t_0}(\omega)|<\epsilon.$ Since $I^{\infty}$ is a compact set, we can find $\omega_1,..., \omega_k\in I^{\infty}$ such that $I^{\infty}=M_{t_0, \omega_1}\cup\cdots \cup M_{t_0, \omega_k}.$ Then for any $t\in U$ with $t\in N_{t_0, \omega_1}\cap\cdots \cap N_{t_0, \omega_k},$ any $i\in \{1,..., k\},$ and any $\tau\in M_{t_0, \omega_i},$ $|\pi_{1, t}(\tau)-\pi_{1, t_0}(\omega_i)|<\epsilon,$ which implies that the map $t\mapsto J_t$ is continuous with respect to the Hausdorff metric. Furthermore, the Hausdorff dimension function on the space of compact subsets of $\mathbb{R}^m$ endowed with the Hausdorff metric is Borel measurable (see \cite[Theorem 2.1]{MM}). Hence we obtain the desired property.
\end{rem}
For $A\subset {\mathbb R}^m$ we denote by $\dim_H(A)$ the Hausdorff dimension of $A$. We now present one of the main results of this paper. 
\begin{syuA}[Theorem \ref{thm1}]
Let $\{\Phi_t\}_{t\in U}$ be a TNCIFS. 
Then 
\begin{enumerate}
\item[(i)] \[\dim_H(J_t)= \min\{m, s(t)\}\ \text{for}\ \mdl-\text{a.e.}\ t\in U;\]

\item[(ii)] \[\mathcal{L}_m(J_t)>0\ \text{for}\ \mdl-\text{a.e.}\ t\in \{t\in U\ : s(t)>m\}.\]
\end{enumerate}
\end{syuA}
%We now give a family of parameterized systems $\{\Phi_t\}_{t\in U}$ such that $\{\Phi_t\}_{t\in U}$ is a TNCIFS but $\Phi_t$ does not satisfy the open set condition (\ref{osc}) for any $t\in U$. In order to do that, we set $$X=\left\{x\in \mathbb{C}\ :\ |x|\le \frac{1}{1-\frac{1}{\sqrt{2}}}\right\},U=\left\{t\in \mathbb{C}\ :\ |t|< \frac{1}{\sqrt{2}},\ t\notin \mathbb{R}\right\}.$$ Let $t\in U$. For each $j\in \mathbb{N},$ we define $$\Phi^{(j)}_t=\{z\mapsto\phi^{(j)}_{1, t}(z), z \mapsto\phi^{(j)}_{2, t}(z)\}:=\{z \mapsto t z, z\mapsto tz+1/j\}.$$We give the following theorem.
%\begin{rem}
%\label{remmaina}
%\begin{enumerate}
%\item[(i)]
%\item[(ii)]There exists a family $\{\Phi_t\}_{t\in U}$ of parameterized NIFSs such that $t\mapsto s(t)$ is not continuous (see ).
%\end{enumerate}
%\end{rem}
Main Theorem A is a generalization of \cite[Theorem 3.1]{SSU}. We illustrate Main Theorem A by presenting the following important example. We set $$X=\left\{z\in \mathbb{C}\ :\ |z|\le \frac{1}{1-2\times 5^{-5/8}}\right\},U=\left\{t\in \mathbb{C}\ :\ |t|< 2\times 5^{-5/8},\ t\notin \mathbb{R}\right\}.$$ Note that $2\times 5^{-5/8}\approx 0.73143>1/\sqrt{2}.$ Let $t\in U$. For each $j\in \mathbb{N},$ we define the maps $\phi^{(j)}_{1, t}: X\rightarrow X$ and $\phi^{(j)}_{2, t}: X\rightarrow X$ by \[\phi^{(j)}_{1, t}(z)=t z\ \text{and}\ \phi^{(j)}_{2, t}(z)= tz+\frac{1}{j}\] respectively. For each $j\in \mathbb{N},$ we set $$\Phi^{(j)}_t=\{\phi^{(j)}_{1, t}, \phi^{(j)}_{2, t}\}=\left\{z \mapsto t z, z\mapsto tz+\frac{1}{j}\right\}$$ and $\Phi_t=(\Phi^{(j)}_t)_{j=1}^{\infty}.$ We now present the following theorem, which is the second main result of this paper.
\begin{syuB}[Proposition \ref{notosc} and Proposition \ref{extncifs}]
The family $\{\Phi_t\}_{t\in U}$ of parameterized systems is a TNCIFS but $\Phi_t$ does not satisfy the open set condition $(\ref{osc})$ for any $t\in U$. 
\end{syuB}
Note that we cannot apply the framework of Rempe-Gillen and Urba$\acute{\rm n}$ski \cite{RU} to the study of the limit set $J_t$ of $\Phi_t$ since $\Phi_t$ does not satisfy the open set condition (\ref{osc}) for any $t\in U$. We calculate the lower pressure function $\lpf_{t}$ of $\Phi_t$ as follows. For any $s\in [0, \infty),$

\begin{align*}
\lpf_{t}(s)&=\liminf\limits_{n\rightarrow \infty}\frac{1}{n}\log \sum_{\omega\in I^n}(||D\phi_{\omega, t}||_X)^s
=\liminf\limits_{n\rightarrow \infty}\frac{1}{n}\log \sum_{\omega\in I^n}|t|^{ns}\\
&=\liminf\limits_{n\rightarrow \infty}\frac{1}{n}\log (2^n|t|^{ns})
=\log 2+s\log |t|.
\end{align*}

Hence for each $t\in U,$ $\lpf_{t}(s)$ has the zero $$s(t)=\frac{\log2}{-\log |t|}.$$ By Main Theorem A, we obtain the following.
\begin{cor} Let $J_t$ be the limit set corresponding to $t$. Then
 $$\dim_H(J_t)=\min\{2, s(t)\}=s(t)$$ for a.e. $t\in \{t\in \mathbb{C}\ :\ |t|\le1/\sqrt{2}, t\notin \mathbb{R}\}(\subset U)$ and $$\mathcal{L}_2(J_t)>0$$ for a.e. $t\in \{t\in \mathbb{C}\ :\ 1/\sqrt{2}<|t|<2\times 5^{-5/8}, t\notin \mathbb{R}\}(\subset U).$
\end{cor}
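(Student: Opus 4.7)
The plan is to reduce the corollary to a direct application of Main Theorem A, exploiting the explicit formula for the Bowen dimension $s(t)$ that has already been derived in the discussion preceding the statement. By Main Theorem B, $\{\Phi_t\}_{t\in U}$ is a TNCIFS on $X\subset\mathbb{C}\cong\mathbb{R}^2$, so the hypotheses of Main Theorem A are met with $m=2$ and parameter dimension $d=2$. Therefore it suffices to rewrite the two conclusions of Main Theorem A using the identity $s(t)=\log 2/(-\log|t|)$ and to identify the threshold parameter value at which $s(t)$ crosses $m=2$.

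First I would determine the critical modulus. Solving $s(t)=2$, i.e., $\log 2/(-\log|t|)=2$, gives $|t|=2^{-1/2}=1/\sqrt{2}$. Since $s(t)$ is strictly decreasing in $|t|$ on $(0,1)$, we have $s(t)\le 2$ exactly when $|t|\le 1/\sqrt{2}$ and $s(t)>2$ exactly when $|t|>1/\sqrt{2}$. Combined with the constraint $|t|<2\cdot 5^{-5/8}$ defining $U$ and the exclusion $t\notin\mathbb{R}$, this partitions $U$ into the two subregions appearing in the corollary.

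Next I would apply Main Theorem A(i) on the first region: for $\mathcal{L}_2$-a.e.\ $t$ in $\{t\in\mathbb{C}:|t|\le 1/\sqrt{2},\,t\notin\mathbb{R}\}\subset U$, we have $\dim_H(J_t)=\min\{2,s(t)\}=s(t)$. Then I would apply Main Theorem A(ii) on the second region: $\{t\in U:s(t)>2\}=\{t\in\mathbb{C}:1/\sqrt{2}<|t|<2\cdot 5^{-5/8},\,t\notin\mathbb{R}\}$, yielding $\mathcal{L}_2(J_t)>0$ for $\mathcal{L}_2$-a.e.\ $t$ in this set.

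There is essentially no obstacle here; the proof is a bookkeeping exercise once Main Theorem A and Main Theorem B are in hand, together with the already completed pressure computation. The only point requiring a sentence of care is the observation that both subregions are genuine Borel sets of positive planar Lebesgue measure (each is a punctured open or half-open annulus minus the real axis, a null set), so the ``a.e.'' statements are substantive and nothing degenerate happens at the boundary $|t|=1/\sqrt{2}$.
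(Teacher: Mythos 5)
Your proof is correct and takes the same route the paper intends: apply Main Theorem A (with $m=d=2$) to the family established to be a TNCIFS in Main Theorem B, substitute the computed Bowen dimension $s(t)=\log 2/(-\log|t|)$, and locate the crossover $|t|=1/\sqrt{2}$. One small slip worth fixing: $s(t)=\log 2/(-\log|t|)$ is strictly \emph{increasing} in $|t|$ on $(0,1)$ (as $|t|\uparrow 1$, $-\log|t|\downarrow 0$ so $s(t)\uparrow\infty$), not decreasing as you wrote; the monotonicity direction you stated would actually give $s(t)\le 2\iff|t|\ge 1/\sqrt2$, contradicting the (correct) conclusion you then drew, so the word ``decreasing'' should read ``increasing.''
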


\section{On the conditions of transversal families of non-autonomous conformal iterated function systems}
In this section, we give some further conditions under which the conditions 3, 4, 5 in Definition \ref{tncifs} hold. In the rest of this section, suppose that $\{\Phi_t\}_{t\in U}=\{(\{\phi_{i, t}^{(j)}:X\rightarrow X\}_{i\in I^{(j)}})_{j=1}^{\infty}\}_{t\in U}$ is a family of parameterized NIFSs with the conformality condition and the uniform contraction condition. 
\begin{pro}
If $m=1,$ suppose that there exist positive real valued continuous functions $\alpha, \beta$, and $C$ on $U$ such that for any $t\in U$, any $j\in \mathbb{N}$, any $i\in I^{(j)},$ \[\left|D\phi^{(j)}_{i, t}(x)-D\phi^{(j)}_{i, t}(y)\right|\le C(t)|x-y|^{\alpha(t)}\] and \[\beta(t)<|D\phi^{(j)}_{i, t}(x)|\] for any points $x, y$ in a bounded open interval $V\supset X.$ Then the bounded distortion condition holds.
\end{pro}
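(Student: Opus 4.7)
The plan is to exploit the chain rule and convert the bounded distortion ratio into a telescoping sum, which I would then control by combining the H\"older hypothesis on the derivative with the uniform contraction of the tail compositions. This is the classical one-dimensional Koebe-type argument adapted to the non-autonomous setting, and I expect the proof to be essentially routine once the non-autonomous indexing is set up properly.

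First, I would fix $t\in U$, $n,j\in \mathbb{N}$, $\omega\in I_n^{\infty}$, and $x_1,x_2\in V$, introduce the shorthand $\psi_k:=\phi^{(n+k-1)}_{\omega_{n+k-1},t}$ for $k=1,\dots,j$ and $\Psi_k:=\psi_{k+1}\circ\cdots\circ\psi_j$ (with $\Psi_j:={\rm id}_V$), and apply the chain rule in the form $D\phi_{\omega|_j,t}(x)=\prod_{k=1}^{j}D\psi_k(\Psi_k(x))$. The hypothesis supplies the uniform lower bound $|D\psi_k|\ge \beta(t)>0$, so it is legitimate to pass to logarithms and analyse
\[\log\frac{|D\phi_{\omega|_j,t}(x_1)|}{|D\phi_{\omega|_j,t}(x_2)|}=\sum_{k=1}^{j}\log\frac{|D\psi_k(\Psi_k(x_1))|}{|D\psi_k(\Psi_k(x_2))|}.\]

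The second step is to bound each summand using the elementary inequality $|\log|a|-\log|b||\le |a-b|/\min(|a|,|b|)$, then the H\"older hypothesis, and finally the observation that $\Psi_k$ is a length-$(j-k)$ composition of maps drawn from the admissible family starting at the shifted index $n+k$. This last point---where most of the bookkeeping lives---lets me invoke uniform contraction for $\Psi_k$ to get $|\Psi_k(x_1)-\Psi_k(x_2)|\le \gamma^{j-k}|V|$. Assembling these pieces and summing a geometric series yields
\[\left|\log\frac{|D\phi_{\omega|_j,t}(x_1)|}{|D\phi_{\omega|_j,t}(x_2)|}\right|\le \frac{C(t)|V|^{\alpha(t)}}{\beta(t)}\sum_{k=1}^{j}\gamma^{(j-k)\alpha(t)}\le \frac{C(t)|V|^{\alpha(t)}}{\beta(t)\bigl(1-\gamma^{\alpha(t)}\bigr)},\]
which is finite because $\alpha(t)>0$ and, crucially, independent of $n$, $j$, and $\omega$.

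I would then define $K(t):=\exp\!\bigl(C(t)|V|^{\alpha(t)}/[\beta(t)(1-\gamma^{\alpha(t)})]\bigr)$ and note that it is continuous on $U$ because $\alpha,\beta,C$ are continuous and strictly positive. The only conceptual obstacle is verifying that the tail $\Psi_k$ really is admissible, so that uniform contraction applies to it; this follows immediately from the fact that the uniform contraction hypothesis in Definition \ref{tncifs} is quantified over every starting index, so the $k$-shift of $\omega$ remains a valid sequence indexed by $I_{n+k}^{\infty}$. Beyond this piece of bookkeeping, the argument is a routine geometric-series estimate.
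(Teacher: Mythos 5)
Your proposal is correct and follows essentially the same route as the paper's proof: chain rule to telescope the logarithm of the derivative ratio into a sum, the elementary Lipschitz bound on $\log$ via the lower bound $\beta(t)$, the H\"older hypothesis, uniform contraction of the tail compositions (combined with the mean value theorem on the convex set $V$), and a geometric series. The only cosmetic difference is that you bound $|x_1-x_2|$ by $|V|$ early while the paper carries $|x-y|^{\alpha(t)}$ to the end and then takes the supremum; everything else, including the reindexing of the shifted tail, matches.
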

\begin{proof}
Let $t\in U$, let $n, j\in \mathbb{N}$ and let $\omega\in I^{n+j-1}_n.$ Then for any $x, y\in V,$ 
\begin{align*}
\left|\log\frac{|D\phi_{\omega, t}(x)|}{|D\phi_{\omega, t}(y)|}\right|&=\left|\sum_{k=n}^{n+j-1}\log\left|D\phi^{(k)}_{\omega_k, t}(\phi_{{\omega_{k+1}\cdots \omega_{n+j-1}}, t}(x))\right|-\log\left|D\phi^{(k)}_{\omega_k, t}(\phi_{{\omega_{k+1}\cdots \omega_{n+j-1}}, t}(y))\right|\right|\\
&\le \sum_{k=n}^{n+j-1}\left|\log\left|D\phi^{(k)}_{\omega_k, t}(\phi_{{\omega_{k+1}\cdots \omega_{n+j-1}}, t}(x))\right|-\log\left|D\phi^{(k)}_{\omega_k, t}(\phi_{{\omega_{k+1}\cdots \omega_{n+j-1}}, t}(y))\right|\right|\\
&\le \beta(t)^{-1}\sum_{k=n}^{n+j-1}\left|\left|D\phi^{(k)}_{\omega_k, t}(\phi_{{\omega_{k+1}\cdots \omega_{n+j-1}}, t}(x))\right|-\left|D\phi^{(k)}_{\omega_k, t}(\phi_{{\omega_{k+1}\cdots \omega_{n+j-1}}, t}(y))\right|\right|\\
&\le \beta(t)^{-1}\sum_{k=n}^{n+j-1}C(t)\left|\phi_{{\omega_{k+1}\cdots \omega_{n+j-1}}, t}(x)-\phi_{{\omega_{k+1}\cdots \omega_{n+j-1}}, t}(y)\right|^{\alpha(t)}\\
&\le \beta(t)^{-1}C(t)\sum_{k=n}^{n+j-1}\gamma^{\alpha(t)(n+j-k-1)}|x-y|^{\alpha(t)}\le \frac{C(t)}{\beta(t)\left(1-\gamma^{\alpha(t)}\right)}\sup_{x, y\in V}|x-y|^{\alpha(t)},
\end{align*}
where $\gamma$ is the constant coming from the uniform contraction condition.
\end{proof}
Let $t_1, t_2\in U.$ For any $j\in \mathbb{N}$ and any $i\in I^{(j)},$ we set $$||\phi_{i, t_1}^{(j)}-\phi_{i, t_2}^{(j)}||_{\infty}=\sup\{|\phi_{i, t_1}^{(j)}(x)-\phi_{i, t_2}^{(j)}(x)|\ :\ x\in X\}$$ and $$||\Phi_{t_1}-\Phi_{t_2}||_{\infty}=\sup_{j\in \mathbb{N}}\max_{i\in I^{(j)}}||\phi_{i, t_1}^{(j)}-\phi_{i, t_2}^{(j)}||_{\infty}.$$
\begin{pro}
\label{condtncifs2}
 Suppose that $\{\Phi_t\}_{t\in U}$ satisfies the following:
\begin{enumerate}
\item[(a)] $\text{For any}\ \epsilon>0\ \text{and}\ t_0\in U,\ \text{there exists}\ \delta>0\ \text{such that for any}$ $t\in U$ with $|t_0-t|<\delta,$ \[||\Phi_{t_0}-\Phi_{t}||_{\infty}<\epsilon.\]
\end{enumerate} Then the continuity condition holds.
\end{pro}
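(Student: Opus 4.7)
\bigskip

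\noindent\textbf{Proof proposal.} My plan is to fix $n\in\mathbb{N}$ and a point $(t_0,\omega_0)\in U\times I_n^\infty$, then show joint continuity at $(t_0,\omega_0)$ via a standard three-term triangle inequality that splits the comparison $\pi_{n,t}(\omega)-\pi_{n,t_0}(\omega_0)$ into (a) an address-tail error, (b) a symbolic-topology error, and (c) a parameter-perturbation error. The key input is Remark \ref{rem1}(ii), which identifies $\pi_{n,t}(\omega)$ with $\lim_{j\to\infty}\phi_{\omega|_j,t}(x)$ for any $x\in X$, and allows us to reduce the problem to comparing finite compositions.

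First I would pick any $x_0\in X$ and note that, by the uniform contraction condition together with the mean value inequality on the convex set $X$, the map $\phi_{\omega|_j,t}$ is $\gamma^j$-Lipschitz on $X$; in particular $\phi_{\omega|_j,t}(X)$ has diameter at most $\gamma^j|X|$, so
\[
\bigl|\pi_{n,t}(\omega)-\phi_{\omega|_j,t}(x_0)\bigr|\le \gamma^j|X|
\]
for every $t\in U$ and every $\omega\in I_n^\infty$. Given $\varepsilon>0$, choose $j$ with $\gamma^j|X|<\varepsilon/3$; this disposes of the tail terms for both $(t,\omega)$ and $(t_0,\omega_0)$ simultaneously. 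Next, by definition of the product topology on $I_n^\infty$, the cylinder $M:=\{\omega\in I_n^\infty:\omega|_j=\omega_0|_j\}$ is an open neighborhood of $\omega_0$, and for any $\omega\in M$ one has $\phi_{\omega|_j,t}(x_0)=\phi_{\omega_0|_j,t}(x_0)$ exactly; this takes care of the symbolic error.

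It remains to estimate $|\phi_{\omega_0|_j,t}(x_0)-\phi_{\omega_0|_j,t_0}(x_0)|$ as $t\to t_0$. This is the main technical step. I would run a standard telescoping argument: writing $\omega_0|_j=\omega_n\omega_{n+1}\cdots\omega_{n+j-1}$ and defining the mixed compositions
\[
P_k(x_0)=\phi^{(n)}_{\omega_n,t}\circ\cdots\circ\phi^{(k-1)}_{\omega_{k-1},t}\circ\phi^{(k)}_{\omega_k,t_0}\circ\cdots\circ\phi^{(n+j-1)}_{\omega_{n+j-1},t_0}(x_0)
\]
(with $P_n(x_0)=\phi_{\omega_0|_j,t_0}(x_0)$ and $P_{n+j}(x_0)=\phi_{\omega_0|_j,t}(x_0)$), the difference telescopes into $j$ summands. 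Each summand is controlled by applying the mean value inequality to the outer block $\phi^{(n)}_{\omega_n,t}\circ\cdots\circ\phi^{(k-1)}_{\omega_{k-1},t}$ on the convex set $X$, using the uniform contraction condition to bound its Lipschitz constant by $\gamma^{k-n}$, and by bounding the inner swap $|\phi^{(k)}_{\omega_k,t}(y)-\phi^{(k)}_{\omega_k,t_0}(y)|$ by $\|\Phi_t-\Phi_{t_0}\|_\infty$. Summing gives
\[
\bigl|\phi_{\omega_0|_j,t}(x_0)-\phi_{\omega_0|_j,t_0}(x_0)\bigr|\le \frac{1}{1-\gamma}\,\|\Phi_t-\Phi_{t_0}\|_\infty.
\]

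Finally, by hypothesis (a) choose $\delta>0$ so that $|t-t_0|<\delta$ forces $\|\Phi_t-\Phi_{t_0}\|_\infty<(1-\gamma)\varepsilon/3$; combining the three estimates on the neighborhood $B(t_0,\delta)\times M$ of $(t_0,\omega_0)$ yields $|\pi_{n,t}(\omega)-\pi_{n,t_0}(\omega_0)|<\varepsilon$, which is the continuity condition. The only mildly delicate point is the telescoping step above, where convexity of $X$ is essential so that the mean value inequality applies to each outer block; the rest is routine bookkeeping with the uniform contraction rate $\gamma$.
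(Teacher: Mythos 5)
Your proposal is correct and takes essentially the same approach as the paper: the core of both arguments is the telescoping estimate $|\phi_{\omega_0|_j,t}(x)-\phi_{\omega_0|_j,t_0}(x)|\le \frac{1}{1-\gamma}\|\Phi_t-\Phi_{t_0}\|_\infty$, which is exactly the paper's Claim 4.3 (proved there by peeling off the innermost map rather than swapping left-to-right, an inessential reordering). The only cosmetic difference is that you truncate at depth $j$ and absorb the symbolic variable by matching the first $j$ coordinates, whereas the paper invokes the already-established continuity of $\pi_{n,t_0}$ on $I_n^\infty$ and then lets $j\to\infty$ in the telescoping bound; the underlying mechanism is identical.
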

\begin{proof}
Let $n\in \mathbb{N}.$ Fix $\epsilon>0$ and $(t_0, \omega_0)\in U\times I_n^{\infty}.$ Since the map $\pi_{n, t_0}$ is continuous at $\omega_0\in I_n^{\infty},$ there exists a neighborhood $N_{t_0, \omega_0}\subset I_n^{\infty}$ of $\omega_0$ such that for any $\tau\in N_{t_0, \omega_0}$ 
\begin{align}
\label{rempi1}
|\pi_{n, t_0}(\omega_0)-\pi_{n, t_0}(\tau)|<\frac{\epsilon}{2}.
\end{align}
Furthermore, we prove the following claim.
\begin{cla}
\label{CLAIM}
For any $t\in U,$ any $j\in \mathbb{N},$ any $\tau=\tau_n\tau_{n+1}\cdots\in I_n^{\infty},$ and any point $x\in X,$ \begin{align}\label{CLAIM1}|\phi_{\tau|_j, t_0}(x)-\phi_{\tau|_j, t}(x)|\le \frac{1}{1-\gamma}||\Phi_{t_0}-\Phi_{t}||_{\infty},\end{align} where $\gamma$ is the constant coming from the uniform contraction condition.
\end{cla}
\begin{proof}[Proof of Claim $\ref{CLAIM}$]
\begin{align*}
|\phi_{\tau|_j, t_0}(x)-\phi_{\tau|_j, t}(x)|\le &|\phi_{\tau|_{j-1}, t_0}(\phi_{\tau_{n+j-1}, t_0}(x))-\phi_{\tau|_{j-1}, t_0}(\phi_{\tau_{n+j-1}, t}(x))|\\&+|\phi_{\tau|_{j-1}, t_0}(\phi_{\tau_{n+j-1}, t}(x))-\phi_{\tau|_{j-1}, t}(\phi_{\tau_{n+j-1}, t}(x))|\\&\le \gamma^{j-1} ||\Phi_{t_0}-\Phi_{t}||_{\infty}+|\phi_{\tau|_{j-1}, t_0}(x^{\prime})-\phi_{\tau|_{j-1}, t}(x^{\prime})|\\&\hspace{-40pt}(\mbox{by the uniform contraction condition and setting}\ x^{\prime}=\phi_{\tau_{n+j-1}, t}(x))\\&\le \gamma^{j-1} ||\Phi_{t_0}-\Phi_{t}||_{\infty}+\gamma^{j-2} ||\Phi_{t_0}-\Phi_{t}||_{\infty}+\cdots +||\Phi_{t_0}-\Phi_{t}||_{\infty}\\&\le \frac{1}{1-\gamma}||\Phi_{t_0}-\Phi_{t}||_{\infty}.
\end{align*}
\end{proof}

Letting $j\to \infty$ in (\ref{CLAIM1}), by Remark \ref{rem1} (ii) we have for any $\tau\in I_n^{\infty},$ $|\pi_{n, t_0}(\tau)-\pi_{n, t}(\tau)|\le {1}/{(1-\gamma)}||\Phi_{t_0}-\Phi_{t}||_{\infty}.$
By (a), there exists a neighborhood $M_{t_0}\subset U$ of $t_0$ such that for any $t\in M_{t_0}$ and any $\tau\in I_n^{\infty},$ 
\begin{align}
\label{rempi2}
|\pi_{n, t_0}(\tau)-\pi_{n, t}(\tau)|\le \frac{1}{1-\gamma}\cdot \frac{(1-\gamma)\epsilon}{2}=\frac{\epsilon}{2}.
\end{align}
By (\ref{rempi1}) and (\ref{rempi2}) we have for any $(t, \tau)\in M_{t_0}\times N_{t_0, \omega_0}\subset U\times I_n^{\infty}$, $|\pi_{n, t_0}(\omega_0)-\pi_{n, t}(\tau)|\le |\pi_{n, t_0}(\omega_0)-\pi_{n, t_0}(\tau)|+|\pi_{n, t_0}(\tau)-\pi_{n, t}(\tau)|\le {\epsilon}/{2}+{\epsilon}/{2}=\epsilon.$
\end{proof}
We show that the conditions introduced in \cite[Section 10]{RU} imply the distortion continuity condition and the continuity condition. Let $t_1, t_2\in U.$ For any $j\in \mathbb{N}$ and any $i\in I^{(j)},$ we set $$||D\phi_{i, t_1}^{(j)}-D\phi_{i, t_2}^{(j)}||_{\infty}=\sup\{|D\phi_{i, t_1}^{(j)}(x)-D\phi_{i, t_2}^{(j)}(x)|\ :\ x\in X\}$$ and $$||D\Phi_{t_1}-D\Phi_{t_2}||_{\infty}=\sup_{j\in \mathbb{N}}\max_{i\in I^{(j)}}||D\phi_{i, t_1}^{(j)}-D\phi_{i, t_2}^{(j)}||_{\infty}.$$ 
\begin{pro}
Suppose that $\{\Phi_t\}_{t\in U}$ satisfies the following:
\begin{enumerate}
\item[(b)] $\text{For any}\ \epsilon>0\ \text{and}\ t_0\in U,\ \text{there exists}\ \delta>0\ \text{such that for any}$ $t\in U$ with $|t_0-t|<\delta,$ \[\max\{||\Phi_{t_0}-\Phi_{t}||_{\infty}, ||D\Phi_{t_0}-D\Phi_{t}||_{\infty}\}<\epsilon;\]
\item[(c)] There is a constant $0< \kappa <1$ such that for any $t\in U,$ any $j\in \mathbb{N}$, and any $i\in I^{(j)}$, 
\begin{align*}
|D\phi^{(j)}_{i, t}(x)|\ge\kappa
\end{align*}
for any $x\in X;$
\item[(d)]For any $\epsilon>0,$ there exists $\delta>0$ such that for any $t\in U$, any $j\in \mathbb{N},$ any $i\in I^{(j)},$ and any $x, y\in X$ with $|x-y|<\delta,$ \[\left||D\phi^{(j)}_{i, t}(x)|-|D\phi^{(j)}_{i, t}(y)|\right|<\epsilon.\]
\end{enumerate} Then the distortion continuity condition and the continuity condition hold.
\end{pro}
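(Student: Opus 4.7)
The continuity condition is essentially free: hypothesis (b) trivially implies hypothesis (a) of Proposition \ref{condtncifs2}, so Proposition \ref{condtncifs2} gives the continuity condition immediately. All the work lies in deriving the distortion continuity from (b), (c), (d).

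Fix $\eta > 0$ and $t_0 \in U$. My plan is to establish the pointwise inequality
\[
\bigl|\log |D\phi_{\omega|_j, t_0}(x)| - \log |D\phi_{\omega|_j, t}(x)|\bigr| \le j\eta
\]
uniformly in $x \in X$, $\omega \in I_n^{\infty}$, and $n, j \in \mathbb{N}$, provided $|t - t_0| \le \delta$ for a suitable $\delta = \delta(\eta, t_0)$. Taking the supremum over $x$ on each side then yields the two-sided bound (\ref{dc}). The starting point is the chain rule identity
\[
|D\phi_{\omega|_j, t}(x)| = \prod_{k=n}^{n+j-1} \bigl|D\phi^{(k)}_{\omega_k, t}(y_{k, t})\bigr|,\qquad y_{k, t} := \phi_{\omega_{k+1}\cdots \omega_{n+j-1}, t}(x),
\]
with the convention $y_{n+j-1, t} = x$. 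Taking logarithms reduces the problem to controlling, for each $k$, the quantity $\bigl|\log|D\phi^{(k)}_{\omega_k, t_0}(y_{k, t_0})| - \log|D\phi^{(k)}_{\omega_k, t}(y_{k, t})|\bigr|$, which I split by the triangle inequality into a base-point piece $A_k$ (keep parameter $t_0$, move $y_{k, t_0} \mapsto y_{k, t}$) and a parameter piece $B_k$ (keep point $y_{k, t}$, move parameter $t_0 \mapsto t$).

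Hypothesis (c) provides $|D\phi^{(k)}_{i, s}(\cdot)| \ge \kappa > 0$, so the mean value theorem applied to $\log$ gives $|\log a - \log b| \le \kappa^{-1}|a-b|$ and converts $A_k$ and $B_k$ into differences of scaling factors. For $A_k$, I would use hypothesis (d): given the target $\kappa\eta/2$, there exists $\delta' > 0$ such that $|y_{k, t_0} - y_{k, t}| < \delta'$ forces the scaling factors to differ by at most $\kappa\eta/2$. The separation $|y_{k, t_0} - y_{k, t}|$ is then controlled by a telescoping argument identical to the proof of Claim \ref{CLAIM}, yielding $|y_{k, t_0} - y_{k, t}| \le (1-\gamma)^{-1} \|\Phi_{t_0} - \Phi_t\|_\infty$; hypothesis (b) makes this smaller than $\delta'$ once $|t - t_0|$ is small enough. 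For $B_k$, the scaling-factor difference is bounded directly by $\|D\Phi_{t_0} - D\Phi_t\|_\infty$, and hypothesis (b) again makes this arbitrarily small. Summing $A_k + B_k \le \eta$ across $k = n, \ldots, n+j-1$ produces the desired bound $j\eta$.

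The only subtlety is keeping the quantifiers straight: from $\eta$ one first extracts a base-point tolerance $\delta'$ via (d), then combines $\delta'$ with the two quantities $\|\Phi_{t_0} - \Phi_t\|_\infty$ and $\|D\Phi_{t_0} - D\Phi_t\|_\infty$ controlled by (b) to fix the final $\delta = \delta(\eta, t_0)$. The contraction constant $\gamma$ and the lower bound $\kappa$ enter only through fixed multiplicative constants, so after redefining $\eta$ by absorbing these the argument is clean. I expect no real obstacle — the proof is a uniform-in-$j$ bookkeeping exercise, linear in $j$ because the chain rule produces exactly $j$ factors.
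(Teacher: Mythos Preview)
Your proposal is correct and follows essentially the same approach as the paper: chain rule plus a two-term triangle-inequality split into a parameter piece and a base-point piece, with (c) feeding the mean value theorem for $\log$, (b) handling the parameter piece, and (d) together with Claim~\ref{CLAIM} handling the base-point piece. The only cosmetic difference is that the paper interpolates through the point $y_{k,t_0}$ (first change parameter, then change point at parameter $t$) whereas you interpolate through $y_{k,t}$ (first change point at parameter $t_0$, then change parameter); since (d) is uniform in $t$ this is immaterial.
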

Note that the conditions (c) and (d) correspond to the conditions (Ka) and (Kb) in \cite[p.2010]{RU} respectively. 
\begin{proof}
We follow the proof of \cite[10.3 Theorem]{RU}. Fix $\eta>0$ and $t_0\in U.$ %We first remark that the condition (iv-3) implies that there exists $K>1$ such that for any $t\in U,$ any $n, j\in \mathbb{N}$, any $\omega\in I_{n}^{n+j-1}$
%\begin{align}
%\label{REM2}
%|D\phi_{\omega|_j, t}(x_1)|\le K|D\phi_{\omega|_j, t}(x_2)|
%\end{align}
%for any $x_1,x_2\in X$.
Then take $\epsilon>0$ such that \begin{align}\label{REM1}
\exp(2\kappa^{-1}\epsilon)\le \exp(\eta),\end{align} and take $\delta_1=\delta_1(\epsilon)$ such that the condition (d) holds. Furthermore, by the condition (b) there exists $\delta=\delta(\delta_1, t_0)$ such that for any $t\in U$ with $|t_0-t|<\delta,$ \begin{align*}\max\{||\Phi_{t_0}-\Phi_{t}||_{\infty}, ||D\Phi_{t_0}-D\Phi_{t}||_{\infty}\}<(1-\gamma)\min\{\epsilon, \delta_1\}.\end{align*}
Then by using the chain rule, the mean value theorem, the condition (c), and Claim \ref{CLAIM}, we have for any $t\in U$ with $|t_0-t|<\delta$, any $n, j\in \mathbb{N}$, any $\omega\in I_{n}^{n+j-1},$ and any $x\in X,$  
\begin{align*}
\left|\log\frac{|D\phi_{\omega, t_0}(x)|}{|D\phi_{\omega, t}(x)|}\right|&=\left|\sum_{k=n}^{n+j-1}\log\left|D\phi^{(k)}_{\omega_k, t_0}(\phi_{{\omega_{k+1}\cdots \omega_{n+j-1}}, t_0}(x))\right|-\log\left|D\phi^{(k)}_{\omega_k, t}(\phi_{{\omega_{k+1}\cdots \omega_{n+j-1}}, t}(x))\right|\right|\\
&\le \sum_{k=n}^{n+j-1}\left|\log\left|D\phi^{(k)}_{\omega_k, t_0}(\phi_{{\omega_{k+1}\cdots \omega_{n+j-1}}, t_0}(x))\right|-\log\left|D\phi^{(k)}_{\omega_k, t}(\phi_{{\omega_{k+1}\cdots \omega_{n+j-1}}, t}(x))\right|\right|\\
&= \sum_{k=n}^{n+j-1}\Biggl|\left(\log\left|D\phi^{(k)}_{\omega_k, t_0}(\phi_{{\omega_{k+1}\cdots \omega_{n+j-1}}, t_0}(x))\right|-\log\left|D\phi^{(k)}_{\omega_k, t}(\phi_{{\omega_{k+1}\cdots \omega_{n+j-1}}, t_0}(x))\right|\right)\\
&\hspace{30pt}+ \left(\log\left|D\phi^{(k)}_{\omega_k, t}(\phi_{{\omega_{k+1}\cdots \omega_{n+j-1}}, t_0}(x))\right|-\log\left|D\phi^{(k)}_{\omega_k, t}(\phi_{{\omega_{k+1}\cdots \omega_{n+j-1}}, t}(x))\right|\right)\Biggr|\\
&\le\sum_{k=n}^{n+j-1}\left|\log\left|D\phi^{(k)}_{\omega_k, t_0}(\phi_{{\omega_{k+1}\cdots \omega_{n+j-1}}, t_0}(x))\right|-\log\left|D\phi^{(k)}_{\omega_k, t}(\phi_{{\omega_{k+1}\cdots \omega_{n+j-1}}, t_0}(x))\right|\right|\\
&\hspace{30pt}+ \sum_{k=n}^{n+j-1}\left|\log\left|D\phi^{(k)}_{\omega_k, t}(\phi_{{\omega_{k+1}\cdots \omega_{n+j-1}}, t_0}(x))\right|-\log\left|D\phi^{(k)}_{\omega_k, t}(\phi_{{\omega_{k+1}\cdots \omega_{n+j-1}}, t}(x))\right|\right|\\
&\le\kappa^{-1}\sum_{k=n}^{n+j-1}\left|\left|D\phi^{(k)}_{\omega_k, t_0}(\phi_{{\omega_{k+1}\cdots \omega_{n+j-1}}, t_0}(x))\right|-\left|D\phi^{(k)}_{\omega_k, t}(\phi_{{\omega_{k+1}\cdots \omega_{n+j-1}}, t_0}(x))\right|\right|\\
&\hspace{30pt}+ \kappa^{-1}\sum_{k=n}^{n+j-1}\left|\left|D\phi^{(k)}_{\omega_k, t}(\phi_{{\omega_{k+1}\cdots \omega_{n+j-1}}, t_0}(x))\right|-\left|D\phi^{(k)}_{\omega_k, t}(\phi_{{\omega_{k+1}\cdots \omega_{n+j-1}}, t}(x))\right|\right|\\
&\le \kappa^{-1}j ||D\Phi_{t_0}-D\Phi_t||_{\infty}+\kappa^{-1}j\epsilon\\
&\le 2\kappa^{-1}j\epsilon. 
\end{align*}
%By combining this inequality with (\ref{REM2}),
Hence we have 
\[\exp(-2\kappa^{-1}j\epsilon)\le \frac{||D\phi_{\omega|_j, t_0}||_X}{||D\phi_{\omega|_j, t}||_X}\le \exp(2\kappa^{-1}j\epsilon).\] Then it follows from (\ref{REM1}) that the distortion continuity condition holds.
The continuity condition also holds by Proposition \ref{condtncifs2}.
\end{proof}
\section{Preliminaries and a proof of Main Theorem A}
In this section we give some lemmas for conformal maps on $\mathbb{R}^m$ and give the proof of Main Theorem A. Let $\{\Phi_t\}_{t\in U}=\left\{\left(\{\phi_{i, t}^{(j)}:X\rightarrow X\}_{i\in I^{(j)}}\right)_{j=1}^{\infty}\right\}_{t\in U}$ be a TNCIFS. 
\subsection{Lemma for conformal maps}
Let $n, k\in \mathbb{N}$ with $n\le k$. Below, we set $||D\phi_{\omega, t}||:=||D\phi_{\omega, t}||_X$ for any $\omega\in I_n^k$ and any $t\in U$. We set $I^{\ast}:=\cup_{n\ge 1}I^n$. This subsection is devoted to the proof of the following lemma.
\begin{lem}
\label{dist}There exists $L\ge 1$ such that for any $t\in U$, any $\omega\in I^{\ast}$ and any $x, y\in X,$
\begin{align}
\label{ine}
|\phi_{\omega, t}(x)-\phi_{\omega, t}(y)|\ge L^{-1}K(t)^{-2}||D\phi_{\omega, t}||\cdot|x-y|,
\end{align}
where $K(t)$ comes from the bounded distortion condition $(\ref{bd})$.
\end{lem}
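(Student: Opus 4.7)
The plan is to derive the bi-Lipschitz lower bound by exploiting the convexity of $X$, the conformality of $\phi_{\omega,t}$ on $V$, and the bounded distortion hypothesis \eqref{bd}. Fix $\omega\in I^{\ast}$, $t\in U$, and write $\phi:=\phi_{\omega,t}$ (this is itself $C^1$ conformal on $V$ as a composition of $C^1$ conformal maps). Since $X$ is convex and $X\subset V$, the segment $c(s):=x+s(y-x)$, $s\in[0,1]$, lies entirely in $X\subset V$, so $\phi$ is conformal along the whole segment. Conformality forces $D\phi(z)=|D\phi(z)|\,A_z$ with $A_z$ orthogonal, giving $|D\phi(z)v|=|D\phi(z)|\,|v|$ for every $v\in\mathbb{R}^m$; however, the pointwise rotations $A_z$ vary with $z$, so a naive integration of the identity $\phi(y)-\phi(x)=\int_0^1 D\phi(c(s))(y-x)\,ds$ does not immediately yield a lower bound on $|\phi(y)-\phi(x)|$.

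The key analytic input I would invoke is a classical distortion lemma for $C^1$ conformal maps with bounded distortion on a convex set: there exists $L_0=L_0(m)\ge 1$ such that
$$|\phi(x)-\phi(y)|\;\ge\; L_0^{-1}\,K(t)^{-1}\,\Bigl(\min_{z\in V}|D\phi(z)|\Bigr)\,|x-y|.$$
This is essentially the content that underlies \cite[Lemma~2.2]{RU} in the autonomous case, and its proof splits by dimension: for $m=1$ the map $\phi$ is affine so the inequality is immediate; for $m=2$ it follows from the Koebe distortion theorem applied to the holomorphic $\phi$; for $m\ge 3$ it follows from Liouville's theorem, which forces $\phi$ to be the restriction of a Möbius transformation, whose rotational and scaling behaviour can then be controlled by direct computation together with the bounded distortion constant.

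Finally, using the bounded distortion assumption \eqref{bd} a second time, namely $\|D\phi\|_X\le K(t)\,\min_{z\in V}|D\phi(z)|$, I substitute into the previous inequality to obtain
$$|\phi(x)-\phi(y)|\;\ge\; L_0^{-1}\,K(t)^{-2}\,\|D\phi\|_X\,|x-y|,$$
which is the stated conclusion with $L=L_0$. The main obstacle in this outline is the quoted classical distortion lemma invoked in the middle step: this is where all the analytic work happens and where the rigidity of conformal maps (via Koebe or Liouville) is used in an essential way, while everything else is bookkeeping with \eqref{bd}. A secondary point to verify is that the composition $\phi_{\omega,t}$ inherits a uniform bounded distortion constant $K(t)$ on $V$ from the hypothesis — but this is built into condition 3 of Definition \ref{tncifs} and requires no further argument.
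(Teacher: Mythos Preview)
Your proposal defers the entire content of the lemma to a black-box ``classical distortion lemma'' that is, modulo one trivial application of \eqref{bd}, \emph{the lemma itself}. You acknowledge this is ``where all the analytic work happens,'' but the sketch you offer for proving it has a concrete error: for $m=1$ the composed map $\phi_{\omega,t}$ is \emph{not} affine. In dimension one the conformality condition is vacuous---every $C^1$ diffeomorphism of an interval is conformal---so the individual $\phi^{(j)}_{i,t}$ can be arbitrary contractions and their compositions are certainly not affine in general. (The one-dimensional case is in fact easy, since $\phi'$ has constant sign and the fundamental theorem of calculus gives $|\phi(x)-\phi(y)|\ge|x-y|\inf|\phi'|$ directly; but that is not the argument you gave.) For $m\ge 2$ the Koebe/Liouville route is viable in principle, yet it still needs real work to pass from distortion estimates on disks to the uniform bi-Lipschitz bound on all of $X$ with a constant depending only on $m$; you have not supplied that work.

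The paper's proof avoids all of this and is both elementary and dimension-independent. It first proves an auxiliary image-ball containment: for a fixed scale $r=\min\{|X|,\tfrac12\mathrm{dist}(X,\partial V)\}$ one has $\phi_{\omega,t}(B(x,r))\supset B(\phi_{\omega,t}(x),K(t)^{-1}\|D\phi_{\omega,t}\|\,r)$, obtained by applying the mean value inequality to $\phi_{\omega,t}^{-1}$ on the largest ball contained in the image. The lemma then follows by a two-case argument. If $|x-y|\le K(t)^{-1}r$, the mean value inequality shows $\phi_{\omega,t}(y)$ lies in that image ball, so one may apply the mean value inequality to $\phi_{\omega,t}^{-1}$ on the (convex) image ball and use bounded distortion once more to get $|x-y|\le K(t)\|D\phi_{\omega,t}\|^{-1}|\phi_{\omega,t}(x)-\phi_{\omega,t}(y)|$. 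If $|x-y|>K(t)^{-1}r$, one picks the point $z\in\partial B(x,K(t)^{-1}r)$ whose image lies on the segment from $\phi_{\omega,t}(x)$ to $\phi_{\omega,t}(y)$, applies Case~1 to $x,z$, and pays the factor $L=|X|/r$. No splitting by dimension, no Koebe, no Liouville---only the mean value inequality, convexity of balls, and \eqref{bd}.
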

We now prove Lemma \ref{dist} by imitating the argument in \cite[pages.73-74]{MoU2} as follows. We set $|X|=\sup_{x, y\in X}|x-y|(< \infty).$ For any set $A\subset \mathbb{R}^m$, we denote by $\partial A$ the boundary of $A$. Let $V$ be an open set with $V\supset X$ coming from the conformality condition in Definition \ref{tncifs}. We set $$r=\min\left\{|X|, \frac{\inf\{|x-y|\ :\ x\in X, y\in \partial V\}}{2}\right\}.$$ In order to prove Lemma \ref{dist}, we give the following lemma.
\begin{lem}
\label{dist1}
Let $t\in U$. For any $\omega\in I^{\ast}$ and $x\in X,$
$$\phi_{\omega, t}(B(x, r))\supset B(\phi_{\omega, t}(x), K(t)^{-1}||D\phi_{\omega, t}||r).$$
\end{lem}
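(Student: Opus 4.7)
The plan is to pull back balls through the conformal map $\phi_{\omega,t}$ along radial paths, and use the bounded distortion estimate (\ref{bd}) together with the definition of $r$ to guarantee that the preimage of any point in the target ball stays inside $B(x,r)$. First I would observe that by the choice of $r$, the ball $B(x,r)$ is contained in $V$ (in fact at distance $\ge r$ from $\partial V$), so that the conformality condition applies on all of $B(x,r)$ and $\phi_{\omega,t}$ is a local diffeomorphism there. Since $\phi_{\omega,t}$ extends conformally to $V$ and has non-vanishing derivative (being a composition of conformal maps with positive scaling factors), it admits local inverse branches.

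Next I would argue by contradiction. Suppose there exists $y\in B(\phi_{\omega,t}(x),K(t)^{-1}\lVert D\phi_{\omega,t}\rVert r)$ with $y\notin \phi_{\omega,t}(B(x,r))$. Parametrize the line segment $\gamma(s)=(1-s)\phi_{\omega,t}(x)+sy$, $s\in[0,1]$, and let $s_0$ be the supremum of those $s$ for which there is a continuous lift $\tilde\gamma\colon[0,s]\to B(x,r)$ with $\tilde\gamma(0)=x$ and $\phi_{\omega,t}\circ\tilde\gamma=\gamma$. By the open mapping / inverse function theorem $s_0>0$, and by continuity $\tilde\gamma(s_0)\in \overline{B(x,r)}$. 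If $\tilde\gamma(s_0)$ lies in the open ball $B(x,r)$, one could continue the lift past $s_0$ (contradicting maximality) unless $s_0=1$; in the latter case $y=\phi_{\omega,t}(\tilde\gamma(1))\in\phi_{\omega,t}(B(x,r))$, a contradiction. Hence $\tilde\gamma(s_0)\in\partial B(x,r)$, so the lift has length $\ge r$.

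The final step is to estimate the length of $\tilde\gamma$ from above using bounded distortion. Since $\phi_{\omega,t}$ is conformal, its local inverse at each point has scaling factor $|D\phi_{\omega,t}(\tilde\gamma(s))|^{-1}$, and (\ref{bd}) gives
\begin{align*}
|D\phi_{\omega,t}(\tilde\gamma(s))|\ \ge\ K(t)^{-1}\lVert D\phi_{\omega,t}\rVert.
\end{align*}
Therefore
\begin{align*}
\mathrm{length}(\tilde\gamma)\ \le\ \int_0^{s_0}\frac{|\gamma'(s)|}{|D\phi_{\omega,t}(\tilde\gamma(s))|}\,ds\ \le\ \frac{K(t)}{\lVert D\phi_{\omega,t}\rVert}\,|y-\phi_{\omega,t}(x)|\ <\ r,
\end{align*}
contradicting the lower bound $r$. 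This forces the initial assumption to be false and gives the desired inclusion.

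The step I expect to be the main obstacle is the path-lifting argument: verifying that the local inverse branch can indeed be continued along the whole segment and that the lift does not escape $B(x,r)$ prematurely. Once one commits to the contradiction framework above, everything follows from conformality, bounded distortion, and the definition of $r$; the only subtlety is ensuring that $\phi_{\omega,t}$ is injective on $B(x,r)$ along the lifted path, which itself follows from the length bound just derived (any two preimages of the same $\gamma(s)$ would have to be connected by a lifted loop of length strictly less than $r$, forcing them to coincide).
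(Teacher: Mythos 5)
Your argument is correct, but it takes a genuinely different route from the paper. The paper uses a maximal inscribed ball argument: it sets $R_{\omega}$ equal to the radius of the largest ball centered at $\phi_{\omega, t}(x)$ contained in $\phi_{\omega, t}(B(x,r))$, observes that $\partial B(\phi_{\omega,t}(x),R_\omega)$ must touch $\partial\phi_{\omega,t}(B(x,r))$, and then applies the mean value inequality to $\phi_{\omega,t}^{-1}$ on the \emph{convex} set $B(\phi_{\omega,t}(x),R_\omega)$ (together with the bounded distortion estimate) to conclude that the preimage of this inscribed ball lies in $B(x, K(t)\|D\phi_{\omega,t}\|^{-1}R_\omega)$, forcing $K(t)\|D\phi_{\omega,t}\|^{-1}R_\omega\ge r$. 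Your proof instead lifts the straight segment from $\phi_{\omega,t}(x)$ to a hypothetical uncovered target point $y$ and estimates the Euclidean length of the lift using the same pointwise lower bound $|D\phi_{\omega,t}(\cdot)|\ge K(t)^{-1}\|D\phi_{\omega,t}\|$ coming from bounded distortion; the length bound $<r$ contradicts the lift reaching $\partial B(x,r)$. Both approaches hinge on exactly the same distortion inequality; the paper's version sidesteps the path-lifting bookkeeping (the maximality/continuation analysis of $s_0$) by exploiting convexity of balls directly, which makes for a shorter write-up, while your version is more topological in flavor and does not rely on the convexity of the inscribed ball but only on the local-diffeomorphism property. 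One minor remark: in this setting $\phi_{\omega,t}$ is in fact globally injective on $V$ (each $\phi^{(j)}_{i,t}$ is a conformal diffeomorphism of $V$ onto its image), so the uniqueness-of-lift concerns you flag at the end are automatic; your lift is just $\phi_{\omega,t}^{-1}\circ\gamma$ on the maximal subinterval where $\gamma$ stays inside $\phi_{\omega,t}(B(x,r))$.
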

\begin{proof}
Let $t\in U$. Fix $x\in X$. For any $\omega\in I^{\ast},$ we set $$R_{\omega}=\sup\{ u> 0\ :\ B(\phi_{\omega, t}(x), u)\subset \phi_{\omega, t}(B(x, r))\}.$$ Then 
\begin{align}\label{inter}\partial B(\phi_{\omega, t}(x), R_{\omega})\cap \partial \phi_{\omega, t}(B(x, r))\neq \emptyset.
\end{align}
Since $B(\phi_{\omega, t}(x), R_{\omega})\subset \phi_{\omega, t}(B(x, r))\subset \phi_{\omega, t}(V)$, by applying the mean value inequality to the map $\phi_{\omega, t}^{-1}$ restricted to the convex set $B(\phi_{\omega, t}(x), R_{\omega})$ and using the bounded distortion condition (\ref{bd}), we have 
\begin{align*}
\phi_{\omega, t}^{-1}(B(\phi_{\omega, t}(x), R_{\omega}))\subset B(x, ||D(\phi_{\omega, t}^{-1})||_{\phi_{\omega, t}(V)}R_{\omega})\subset B(x, K(t)||D\phi_{\omega, t}||^{-1} R_{\omega}).
\end{align*}
This implies 
\begin{align}
\label{hougan}
B(\phi_{\omega, t}(x), R_{\omega})\subset \phi_{\omega, t}(B(x, K(t)||D\phi_{\omega, t}||^{-1} R_{\omega})).
\end{align}
By (\ref{inter}) and (\ref{hougan}), we have $ K(t)||D\phi_{\omega, t}||^{-1} R_{\omega}\ge r.$ By the definition of $R_{\omega}$, we have $$\phi_{\omega, t}(B(x, r))\supset B(\phi_{\omega, t}(x), K(t)^{-1}||D\phi_{\omega, t}||r).$$
\end{proof}
We now give a proof of Lemma \ref{dist}.

\begin{proof}[Proof of Lemma $\ref{dist}$]
Let $t\in U$, $\omega\in I^{\ast}$, and $x, y\in X.$

(Case 1: $|x-y|\le K(t)^{-1}r$) By applying the mean value inequality to the map $\phi_{\omega, t}$ restricted to the convex set $B(x, K(t)^{-1}r)$ and using the bounded distortion condition (\ref{bd}), we have 
\begin{align*}
%\label{hougan2}
\phi_{\omega, t}(y)\in B(\phi_{\omega, t}(x), K(t)^{-1}||D\phi_{\omega, t}||r).
\end{align*}
Moreover, by Lemma \ref{dist1} we have 
\begin{align}
\label{hougan3}
\phi_{\omega, t}(y)\in B(\phi_{\omega, t}(x), K(t)^{-1}||D\phi_{\omega, t}||r)\subset \phi_{\omega, t}(B(x, r))\subset \phi_{\omega, t}(V).
\end{align}
By  (\ref{hougan3}) and applying the mean value inequality to the map $\phi_{\omega, t}^{-1}$ restricted to the convex set $B(\phi_{\omega, t}(x), K(t)^{-1}||D\phi_{\omega, t}||r)$, we have 
\begin{align*}
|x-y|&= |(\phi_{\omega, t})^{-1}(\phi_{\omega, t}(x))-(\phi_{\omega, t})^{-1}(\phi_{\omega, t}(y))|\\
&\le ||D(\phi_{\omega, t})^{-1}||_{\phi_{\omega, t}(V)}|\phi_{\omega, t}(x)-\phi_{\omega, t}(y)|.
\end{align*}
By using the bounded distortion condition (\ref{bd}), we have 
\begin{align}
\label{ine1}
|x-y|\le K(t)||D\phi_{\omega, t}||^{-1}\cdot|\phi_{\omega, t}(x)-\phi_{\omega, t}(y)|.
\end{align}
Hence we obtain (\ref{ine}).

(Case 2: $|x-y|> K(t)^{-1}r$) Since $\phi_{\omega, t}(y)\notin \phi_{\omega, t}(B(x, K(t)^{-1}r)),$ there exists $z\in \partial B(x,  K(t)^{-1}r)$ such that $\phi_{\omega, t}(z)$ belongs to the straight line path from $\phi_{\omega, t}(x)$ to $\phi_{\omega, t}(y)$. Hence 
\begin{align}
\label{ine2}
|\phi_{\omega, t}(x)-\phi_{\omega, t}(y)|\ge |\phi_{\omega, t}(x)-\phi_{\omega, t}(z)|.
\end{align}
Since $|x-z|= K(t)^{-1}r$, by (\ref{ine1}) we have 
\begin{align}
\label{ine3}
|\phi_{\omega, t}(x)-\phi_{\omega, t}(z)|\ge K(t)^{-1}||D\phi_{\omega, t}||\cdot |x-z|=K(t)^{-1}||D\phi_{\omega, t}||K(t)^{-1}r.
\end{align}
By (\ref{ine2}) and (\ref{ine3}) we have 
$$|\phi_{\omega, t}(x)-\phi_{\omega, t}(y)|\ge K(t)^{-1}||D\phi_{\omega, t}||K(t)^{-1}\frac{|x-y|r}{|x-y|}\ge \frac{r}{|X|}K(t)^{-2}||D\phi_{\omega, t}||\cdot{|x-y|}.$$If we set $L=|X|/r (\ge 1)$, then we obtain (\ref{ine}). Thus we have proved our lemma. 
\end{proof}
\subsection{Continuity of the map $t\mapsto s(t)$}
We consider the continuity of the map $U\ni t\mapsto s(t)\in [0, \infty],$ where $s(t)$ is the Bowen dimension of the system $\Phi_t$ (see (\ref{PF3}) for the definition). We give the following.
\begin{pro}
\label{CB1}
Let $\{\Phi_t\}_{t\in U}$ be a TNCIFS. Then the map $t\mapsto s(t)$ is continuous on $U$.
\end{pro}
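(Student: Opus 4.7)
The plan is to use the distortion continuity condition (\ref{dc}) to show that $t\mapsto \underline{P}_t(s)$ is continuous at $t_0$ for each fixed $s\ge 0$, and then to deduce continuity of its zero $s(t)$ by squeezing it between two reference exponents chosen via the monotonicity of $\underline{P}_{t_0}$ stated just after (\ref{PF2}).

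For the first step, I fix $t_0\in U$, $s\ge 0$, and $\eta>0$. Applying (\ref{dc}) with starting index $n=1$ produces $\delta=\delta(\eta, t_0)>0$ such that for every $t\in U$ with $|t-t_0|\le\delta$, every $N\in \mathbb{N}$, and every $\omega\in I^N$,
\[ e^{-N\eta}\,\|D\phi_{\omega,t}\|_X \ \le\ \|D\phi_{\omega,t_0}\|_X \ \le\ e^{N\eta}\,\|D\phi_{\omega,t}\|_X.\]
Raising to the power $s$, summing over $\omega\in I^N$, taking $\tfrac{1}{N}\log$, and then $\liminf_{N\to\infty}$ yields
\[ -s\eta + \underline{P}_t(s) \ \le\ \underline{P}_{t_0}(s) \ \le\ s\eta + \underline{P}_t(s),\]
where the natural extended-real convention is that either $\pm\infty$ value on one side forces the same value on the other. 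In particular, once $\eta$ is small enough (depending on $s$ and on $\underline{P}_{t_0}(s)$), the sign of $\underline{P}_t(s)$ matches that of $\underline{P}_{t_0}(s)$ for $|t-t_0|\le\delta$.

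For the second step I fix $\varepsilon>0$ and assume first that $0<s(t_0)<\infty$. By the definition of $s(t_0)$ combined with the strict monotonicity of $\underline{P}_{t_0}$, there exist $s_1\in(\max\{0,s(t_0)-\varepsilon\},s(t_0))$ with $\underline{P}_{t_0}(s_1)>0$ and $s_2\in(s(t_0),s(t_0)+\varepsilon)$ with $\underline{P}_{t_0}(s_2)<0$. Choosing $\eta$ smaller than $\underline{P}_{t_0}(s_1)/s_1$ and than $-\underline{P}_{t_0}(s_2)/s_2$ (with trivial modifications if either value is infinite) and $t$ correspondingly close to $t_0$, the sign-preservation from the previous paragraph gives $\underline{P}_t(s_1)>0$ and $\underline{P}_t(s_2)<0$, whence $s_1\le s(t)\le s_2$ and $|s(t)-s(t_0)|\le\varepsilon$. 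The degenerate cases $s(t_0)=0$ and $s(t_0)=\infty$ are handled by running only the upper half of the argument (pick $s_2=\varepsilon$ and conclude $s(t)\le\varepsilon$) or only the lower half (for arbitrary $M>0$ pick $s_1=M$ and conclude $s(t)\ge M$). The only point I expect to require care is the bookkeeping for extended-real values $\pm\infty$ of $\underline{P}_{t_0}$ when converting the multiplicative perturbation estimate on $Z_{N,t}(s)$ into the additive one on $\underline{P}_t(s)$, but this is built into the displayed inequality and needs no ingredient beyond (\ref{dc}).
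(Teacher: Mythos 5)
Your proposal is correct and follows essentially the same route as the paper: both derive $-s\eta + \underline{P}_t(s) \le \underline{P}_{t_0}(s) \le s\eta + \underline{P}_t(s)$ for $|t-t_0|\le\delta$ from the distortion continuity condition applied to $Z_{N,\cdot}(s)$, then exploit the monotonicity of $\underline{P}_{t_0}$ to pin $s(t)$ between two reference exponents near $s(t_0)$, handling $s(t_0)=0$ and $s(t_0)=\infty$ by one-sided versions of the same argument. The only cosmetic difference is that the paper evaluates at $s(t_0)\pm\epsilon$ directly (choosing $\eta$ as half the resulting slack), while you pick auxiliary $s_1,s_2$ strictly inside the $\epsilon$-windows; the two are interchangeable.
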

\begin{proof}
Fix $t_0\in U.$ We now show that the map $t\mapsto s(t)$ is continuous at $t_0.$ By the distortion continuity (\ref{dc}), for any $\eta>0$, there exists $\delta=\delta(\eta, t_0)>0$ such that for any $t\in U$ with $|t-t_0|\le \delta$, for any $n\in \mathbb{N}$ and for any $\omega\in I^n,$
\begin{align*} 
\exp(-n \eta)\le \frac{||D\phi_{\omega, t_0}||}{||D\phi_{\omega, t}||}\le \exp(n \eta).
\end{align*}
Hence we have for any $n\in \mathbb{N}$, any $s\in [0, \infty)$ and any $t\in U$ with $|t-t_0|\le \delta,$
\begin{align*}
-s\eta+\frac{1}{n}\log Z_{n, t}(s)\le \frac{1}{n}\log Z_{n, t_0}(s)\le s\eta+\frac{1}{n}\log Z_{n, t}(s), 
\end{align*}
which implies that for any $s\in [0, \infty)$ and any $t\in U$ with $|t-t_0|\le \delta,$
\begin{align}
\label{CB2}
-s\eta+\underline{P}_t(s)\le \underline{P}_{t_0}(s)\le s\eta+\underline{P}_t(s)
\end{align}
(see (\ref{PF1}) and (\ref{PF2}) for the definitions of $Z_{n, t}(s)$ and $\underline{P}_t(s)$).
We divide the argument into three cases.

(Case 1: $s(t_0)=\infty$)
Take $s>0.$ Then by the definition of $s(t_0),$ we have $\underline{P}_{t_0}(s)>0.$ Set \[ \eta=\eta(s):=
\begin{cases}
 \frac{\underline{P}_{t_0}(s)}{2s}&(\underline{P}_{t_0}(s)< \infty)\\
 1&(\underline{P}_{t_0}(s)=\infty).
 \end{cases}
 \]
By (\ref{CB2}), we have for any $t\in U$ with $|t-t_0|\le \delta=\delta(\eta, t_0),$
\begin{align*}
\underline{P}_t(s)\ge \underline{P}_{t_0}(s)-s\eta>0.
\end{align*}
Then by the definition of $s(t),$ we have for any $t\in U$ with $|t-t_0|\le \delta=\delta(\eta, t_0),$ \[s(t)\ge s.\] Since $s$ is arbitrary, we have $\lim_{t\to t_0}s(t)=\infty=s(t_0).$

(Case 2: $s(t_0)\in (0, \infty)$)
Take $\epsilon>0$ with $s(t_0)-\epsilon>0.$ Then by the definition of $s(t_0),$ we have $\underline{P}_{t_0}(s(t_0)+\epsilon)<0$ and $\underline{P}_{t_0}(s(t_0)-\epsilon)>0.$ Set \[ \eta_1(\epsilon):=
\begin{cases}
 \frac{-\underline{P}_{t_0}(s(t_0)+\epsilon)}{2(s(t_0)+\epsilon)}&( \underline{P}_{t_0}(s(t_0)+\epsilon)>-\infty)\\
 1&(\underline{P}_{t_0}(s(t_0)+\epsilon)=-\infty),
 \end{cases}
 \]
  \[ \eta_2(\epsilon):=
\begin{cases}
 \frac{\underline{P}_{t_0}(s(t_0)-\epsilon)}{2(s(t_0)-\epsilon)}&( \underline{P}_{t_0}(s(t_0)-\epsilon)<\infty)\\
 1&(\underline{P}_{t_0}(s(t_0)-\epsilon)=\infty),
 \end{cases}
 \] and $\eta:=\min\{\eta_1(\epsilon), \eta_2(\epsilon)\}.$
By (\ref{CB2}), we have for any $t\in U$ with $|t-t_0|\le \delta=\delta(\eta, t_0),$
\begin{align*}
\underline{P}_{t}(s(t_0)+\epsilon)\le \underline{P}_{t_0}(s(t_0)+\epsilon)+(s(t_0)+\epsilon)\eta<0,
\end{align*}
and 
\begin{align*}
\underline{P}_{t}(s(t_0)-\epsilon)\ge \underline{P}_{t_0}(s(t_0)-\epsilon)-(s(t_0)-\epsilon)\eta>0.
\end{align*}
Then by the definition of $s(t),$ we have for any $t\in U$ with $|t-t_0|\le \delta=\delta(\eta, t_0),$ \begin{align}
\label{CB3}
 s(t_0)-\epsilon\le s(t)\le s(t_0)+\epsilon.
\end{align}
%Furthermore, the positivity of the value of $\underline{P}_{t_0}(s(t_0)-\epsilon)$ and (\ref{CB2}) imply that  there exists $\delta>0$ such that for any $t\in U$ with $|t-t_0|\le \delta,$ 
%\begin{align}
%\label{CB4}
 %s(t_0)-\epsilon\le s(t)\end{align} by the same argument as in the case 1. 
 Then by (\ref{CB3}), we have the map $t\mapsto s(t)$ is continuous at $t_0.$
 
(Case 3: $s(t_0)=0$) Take $\epsilon>0.$ By the definition of $s(t_0),$ we have $\underline{P}_{t_0}(\epsilon)<0.$ Set \[ \eta=\eta(\epsilon):=
\begin{cases}
 \frac{-\underline{P}_{t_0}(\epsilon)}{2\epsilon}&( \underline{P}_{t_0}(\epsilon)>-\infty)\\
 1&(\underline{P}_{t_0}(\epsilon)=-\infty).
 \end{cases}
 \]Then by the same argument as in the case 2, we have that there exists $\delta=\delta(\eta, t_0)>0$ such that for any $t\in U$ with $|t-t_0|\le \delta,$ \begin{align*}
 0 \le s(t)\le \epsilon.
\end{align*} Hence we have the map $t\mapsto s(t)$ is continuous at $t_0.$
\end{proof}
\subsection{Transversality argument}
For $\omega\in I^{\ast},$ let $|\omega|$ be the length of $\omega$. We prove the following two lemmas by imitating the proofs of Lemmas 3.2 and 3.3 in \cite{SSU}.
\begin{lem}
\label{lem1}
Let $\epsilon, a>0$ and $t_0\in U$. We set $\eta=\frac{-\epsilon \log \gamma}{4a+\epsilon}$ and take $\delta=\delta(\eta, t_0)$ coming from the distortion continuity $(\ref{dc})$ ascribed to $\eta$ and $t_0$, where $\gamma$ is the constant coming from the uniform contraction condition. Then for all $\omega\in I^{\ast}$ and $t\in U$ with $|t_0-t|\le \delta$, $||D\phi_{\omega, t_0}||^{a+\frac{\epsilon}{4}}\le ||D\phi_{\omega, t}||^{a}.$ 
\end{lem}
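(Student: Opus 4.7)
The plan is to take logarithms and reduce the claimed inequality to a linear estimate in $n=|\omega|$, then close the gap using the uniform contraction bound $\|D\phi_{\omega,t}\|\le\gamma^n$ together with the distortion continuity bound tied to $\eta$. Specifically, I would first observe that, since both $\|D\phi_{\omega,t_0}\|$ and $\|D\phi_{\omega,t}\|$ are positive, the desired inequality $\|D\phi_{\omega,t_0}\|^{a+\epsilon/4}\le\|D\phi_{\omega,t}\|^{a}$ is equivalent to
\[
(a+\tfrac{\epsilon}{4})\log\|D\phi_{\omega,t_0}\|\le a\log\|D\phi_{\omega,t}\|.
\]

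Next, for $\omega\in I^\ast$ with $n:=|\omega|$, I would invoke the distortion continuity condition (\ref{dc}) (viewing $\omega\in I^n=I^n_1$ as the initial word of some element of $I^\infty_1$) to get
\[
\log\|D\phi_{\omega,t_0}\|-\log\|D\phi_{\omega,t}\|\le n\eta.
\]
Substituting this into the previous inequality, it suffices to prove
\[
(a+\tfrac{\epsilon}{4})n\eta+\tfrac{\epsilon}{4}\log\|D\phi_{\omega,t}\|\le 0.
\]

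Now I would use the uniform contraction condition, which gives $\|D\phi_{\omega,t}\|\le\gamma^n$ and hence $\log\|D\phi_{\omega,t}\|\le n\log\gamma$ (with $\log\gamma<0$). Plugging this in, it suffices to verify the (parameter-free) bound
\[
(a+\tfrac{\epsilon}{4})\eta+\tfrac{\epsilon}{4}\log\gamma\le 0,
\]
i.e. $\eta\le\dfrac{-\epsilon\log\gamma/4}{a+\epsilon/4}=\dfrac{-\epsilon\log\gamma}{4a+\epsilon}$, which is exactly the choice of $\eta$ in the statement. Assembling these three steps gives the lemma.

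There is no serious obstacle here; the content is purely algebraic once the two structural inputs (distortion continuity and uniform contraction) are applied. The only point deserving a brief care note is that the distortion continuity is stated for tails $\omega\in I^\infty_n$ and prefixes $\omega|_j$, so one has to record that any finite $\omega\in I^\ast$ can be written as the length-$n$ prefix of some infinite word in $I^\infty_1$, after which the inequality (\ref{dc}) applies with $j=n$. The rest is a one-line rearrangement using $\log\gamma<0$ to convert the contraction constant into a quantitative gain that swallows the error $n\eta$.
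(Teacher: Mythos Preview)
Your proposal is correct and follows essentially the same approach as the paper: apply distortion continuity to compare $\|D\phi_{\omega,t_0}\|$ and $\|D\phi_{\omega,t}\|$, then use uniform contraction to absorb the leftover $\epsilon/4$ exponent, and finally verify the algebraic identity $(a+\epsilon/4)\eta+(\epsilon/4)\log\gamma=0$. The only cosmetic difference is that the paper carries out the computation multiplicatively with exponentials, while you take logarithms and work additively.
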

\begin{proof}
By the distortion continuity (\ref{dc}), we have 
\begin{align*}
||D\phi_{\omega, t_0}||^{a+\frac{\epsilon}{4}}&\le \exp\left(|\omega|\eta\left(a+\frac{\epsilon}{4}\right)\right)\cdot||D\phi_{\omega, t}||^{a+\frac{\epsilon}{4}}\\
&\le \exp\left(|\omega|\eta\left(a+\frac{\epsilon}{4}\right)\right)\gamma^{|\omega|\frac{\epsilon}{4}}||D\phi_{\omega, t}||^{a}\\
&(\mbox{by the uniform contraction condition})\\
&=\exp\left(|\omega|\left(\eta\left(a+\frac{\epsilon}{4}\right)+{\frac{\epsilon}{4}}\log\gamma\right)\right)\cdot||D\phi_{\omega, t}||^{a}.
\end{align*} 
\end{proof}
\begin{lem}
\label{lem2}
For any compact subset $G\subset U$ and any $\alpha$ with $0< \alpha< m$, there exists a sequence $\{\tilde{C}_n\}_{n=1}^{\infty}$ of positive constants such that $$\lim_{n\to \infty}\frac{\log \tilde{C}_n}{n}=0$$ and for any $\omega, \tau\in I_n^{\infty}$ with $\omega_n\neq \tau_n$,  
$$\int_G \frac{1}{|\pi_{n, t}(\omega)-\pi_{n, t}(\tau)|^{\alpha}}\ d\mathcal{L}_d(t)\le \tilde{C}_n.$$
\end{lem}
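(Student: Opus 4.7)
\textbf{Plan for the proof of Lemma \ref{lem2}.}
The strategy is a standard distribution-function (layer-cake) argument applied to the transversality bound. Fix a compact $G\subset U$, fix $\alpha\in(0,m)$, and set $d_{\omega\tau}(t):=|\pi_{n,t}(\omega)-\pi_{n,t}(\tau)|$ for $\omega,\tau\in I_n^{\infty}$ with $\omega_n\neq\tau_n$. The transversality condition supplies a sequence $\{C_n\}$ with $\log C_n=o(n)$ such that
\begin{equation*}
\mathcal{L}_d\bigl(\{t\in G\ :\ d_{\omega\tau}(t)\le r\}\bigr)\le C_n r^{m}\quad\text{for every }r>0.
\end{equation*}
Since $G$ is also a trivial upper bound for this measure, I have the two simultaneous estimates $\mathcal{L}_d(\{d_{\omega\tau}(t)\le r\})\le\min\{\mathcal{L}_d(G),\,C_n r^{m}\}$.

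Next I apply Cavalieri/layer-cake to the nonnegative function $t\mapsto d_{\omega\tau}(t)^{-\alpha}$:
\begin{equation*}
\int_G d_{\omega\tau}(t)^{-\alpha}\,d\mathcal{L}_d(t)=\int_0^{\infty}\mathcal{L}_d\bigl(\{t\in G\ :\ d_{\omega\tau}(t)<u^{-1/\alpha}\}\bigr)\,du,
\end{equation*}
then change variables $r=u^{-1/\alpha}$ to rewrite this as $\alpha\int_0^{\infty}r^{-\alpha-1}\mathcal{L}_d(\{t\in G:d_{\omega\tau}(t)<r\})\,dr$. I split the $r$-integral at a radius $r_0=r_0(n)$ to be chosen: on $(0,r_0]$ I use $\mathcal{L}_d(\{d_{\omega\tau}(t)<r\})\le C_n r^{m}$, which is integrable near $0$ because $m>\alpha$; on $(r_0,\infty)$ I use the trivial bound $\mathcal{L}_d(G)$, which is integrable at infinity because $\alpha>0$. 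Both tail integrals are polynomial in $r_0$, and optimizing by balancing the two pieces (equivalently taking $r_0=(\mathcal{L}_d(G)/C_n)^{1/m}$) yields
\begin{equation*}
\int_G d_{\omega\tau}(t)^{-\alpha}\,d\mathcal{L}_d(t)\le M_{\alpha,m}\,\mathcal{L}_d(G)^{1-\alpha/m}\,C_n^{\alpha/m},
\end{equation*}
where $M_{\alpha,m}$ depends only on $\alpha$ and $m$.

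I then set $\tilde C_n:=M_{\alpha,m}\,\mathcal{L}_d(G)^{1-\alpha/m}\,C_n^{\alpha/m}$. Because $\alpha<m$ is fixed and $\mathcal{L}_d(G)$ is a fixed finite constant, one has $\log\tilde C_n=(\alpha/m)\log C_n+O(1)$, so $\log\tilde C_n/n\to 0$ as $n\to\infty$ by the hypothesis on $\{C_n\}$. This delivers the asserted bound uniformly in pairs $\omega,\tau\in I_n^{\infty}$ with $\omega_n\neq\tau_n$, which is exactly the content of the lemma.

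The only genuine point requiring care is making sure that no hidden constants depend on the pair $(\omega,\tau)$ or on $n$ beyond their appearance through $C_n$; this is guaranteed because the transversality bound is uniform in $(\omega,\tau)$ with $\omega_n\neq\tau_n$ and because the layer-cake split uses only $\mathcal{L}_d(G)$ and $C_n$. There is no serious obstacle; the argument is a direct adaptation of \cite[Lemma 3.3]{SSU} with the constants $C_n$ replacing a constant independent of $n$.
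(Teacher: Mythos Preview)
Your argument is correct and follows essentially the same layer-cake strategy as the paper: write the integral via the distribution function, then split into two pieces controlled respectively by the transversality bound $C_n r^m$ and by the trivial bound $\mathcal{L}_d(G)$. The only difference is the choice of split point: the paper splits at the fixed radius $|X|$ (using that $|\pi_{n,t}(\omega)-\pi_{n,t}(\tau)|\le |X|$ always), obtaining $\tilde C_n=|X|^{-\alpha}\mathcal{L}_d(G)+C_n\,\frac{|X|^{m-\alpha}}{m/\alpha-1}$, which is linear in $C_n$; you instead split at the $n$-dependent optimizer $r_0=(\mathcal{L}_d(G)/C_n)^{1/m}$, yielding the sharper $\tilde C_n\asymp C_n^{\alpha/m}$. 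Both choices give $\log\tilde C_n=o(n)$, so the improvement is immaterial for the lemma as stated, but your version has the mild advantage of not invoking the diameter bound on $X$.
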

\begin{proof}Let $n\in \mathbb{N}$. By the transversality condition we have that
\begin{align*}
\int_G \frac{1}{|\pi_{n, t}(\omega)-\pi_{n, t}(\tau)|^{\alpha}}\ d\mathcal{L}_d(t)&=\int_0^{\infty} \mathcal{L}_d\left(\{t\in G\ :\ \frac{1}{|\pi_{n, t}(\omega)-\pi_{n, t}(\tau)|^{\alpha}}\ge x\}\right)\ dx \\
&=\int_0^{\infty} \mathcal{L}_d\left(\{t\in G\ :\ {|\pi_{n, t}(\omega)-\pi_{n, t}(\tau)|}\le \frac{1}{x^{1/\alpha}}\}\right)\ dx\\
&=\int_0^{|X|^{-\alpha}} \mdl(G)\ dx+\int_{|X|^{-\alpha}}^{\infty}C_n\frac{1}{x^{m/\alpha}}\ dx\\
&=|X|^{-\alpha}\mdl(G)+C_n\left[\frac{1}{1-m/\alpha}x^{1-m/\alpha}\right]_{|X|^{-\alpha}}^{\infty}\\
&=|X|^{-\alpha}\mdl(G)+C_n\frac{1}{m/\alpha-1}|X|^{m-\alpha}=:\tilde{C}_n.
\end{align*}
Since $\frac{1}{n}\log C_n\to 0$ as $n\to \infty$, we have $\frac{1}{n} \log \tilde{C}_n\to 0$ as $n\to \infty.$ 
\end{proof}
For any $\omega\in I^{\ast},$ we define a cylinder set $[\omega]$ as $[\omega]=\{\tau\in I^{\infty}\ :\ \tau_1=\omega_1,..., \tau_{|\omega|}=\omega_{|\omega|}\}$. We denote by $\delta_{\omega}$ the Dirac measure at $\omega\in I^{\infty}.$ We give a Gibbs-like measure by employing the proof of {\em Claim} in the proof of 3.2 Theorem in \cite{RU} and the argument in \cite[page. 232]{HZ}.
\begin{lem}[The existence of a Gibbs-like measure]
\label{lem3}
Let $t\in U$ and let $s\ge 0$. Then there exists a Borel probability measure $\mu_{t, s}$ on $I^{\infty}$ such that for any $\omega\in I^{\ast}$, 
\begin{align}
\label{gibbslike}
\mu_{t, s}([\omega])\le K(t)^{s}\frac{||D\phi_{\omega, t}||^s}{Z_{n, t}(s)},
\end{align}
where $K(t)$ is the constant coming from the bounded distortion $(\ref{bd})$ and $Z_{n, t}(s)=\sum_{\omega\in I^n}||D\phi_{\omega, t}||^s$.   
\end{lem}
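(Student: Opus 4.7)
The plan is to construct $\mu_{t,s}$ as a weak-$\ast$ subsequential limit of natural level-$N$ probability measures on the compact metrizable space $I^{\infty}$. Fix $t\in U$ and $s\ge 0$. For each $N\in\mathbb{N}$ and each $\omega\in I^{N}$, fix an arbitrary extension $\omega^{\star}\in[\omega]\subset I^{\infty}$, and define
\[
\nu_{N}:=\sum_{\omega\in I^{N}}\frac{||D\phi_{\omega,t}||^{s}}{Z_{N,t}(s)}\,\delta_{\omega^{\star}}.
\]
Since Borel probability measures on the compact metrizable space $I^{\infty}$ form a weak-$\ast$ compact set, some subsequence $\{\nu_{N_k}\}$ converges weak-$\ast$ to a probability measure $\mu_{t,s}$. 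Every cylinder $[\omega]$ is clopen in the product topology, so its indicator is continuous and $\mu_{t,s}([\omega])=\lim_{k\to\infty}\nu_{N_k}([\omega])$. It therefore suffices to prove the uniform bound $\nu_{N}([\omega])\le K(t)^{s}||D\phi_{\omega,t}||^{s}/Z_{n,t}(s)$ for all $N\ge n:=|\omega|$.

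The analytical heart of the argument is a one-sided submultiplicativity estimate for $||D\phi_{\omega\tau,t}||$ that spends only a single factor of $K(t)$. For $\omega\in I^{n}$ and $\tau\in I_{n+1}^{N}$, the chain rule together with conformality gives, at every $x\in X$,
\[
|D\phi_{\omega\tau,t}(x)|=|D\phi_{\omega,t}(\phi_{\tau,t}(x))|\cdot|D\phi_{\tau,t}(x)|,
\]
while the bounded distortion condition (\ref{bd}) implies $|D\phi_{\omega,t}(y)|\ge K(t)^{-1}||D\phi_{\omega,t}||$ for every $y\in V$. Evaluating the chain-rule equality at $x\in X$ nearly optimal for $|D\phi_{\tau,t}|$ yields
\[
||D\phi_{\omega\tau,t}||\ge K(t)^{-1}\,||D\phi_{\omega,t}||\cdot||D\phi_{\tau,t}||,
\]
and raising to the $s$-th power and summing over $(\omega,\tau)\in I^{n}\times I_{n+1}^{N}$ yields the key denominator estimate
\[
Z_{N,t}(s)\ge K(t)^{-s}\,Z_{n,t}(s)\,\sum_{\tau\in I_{n+1}^{N}}||D\phi_{\tau,t}||^{s}.
\]

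With this in place, I would combine the trivial upper bound $||D\phi_{\omega\tau,t}||\le||D\phi_{\omega,t}||\cdot||D\phi_{\tau,t}||$ in the numerator of $\nu_{N}([\omega])=\sum_{\tau\in I_{n+1}^{N}}||D\phi_{\omega\tau,t}||^{s}/Z_{N,t}(s)$ with the preceding lower bound on $Z_{N,t}(s)$ in the denominator to obtain $\nu_{N}([\omega])\le K(t)^{s}||D\phi_{\omega,t}||^{s}/Z_{n,t}(s)$, and then pass to the weak-$\ast$ limit. The main obstacle is producing the constant $K(t)^{s}$ rather than $K(t)^{2s}$: a naive approach that splits $||D\phi_{\omega\tau,t}||$ as a product of two sup-norms and then applies (\ref{bd}) separately to each factor would spend two factors of $K(t)$, so it is essential to exploit the pointwise chain-rule equality above and apply bounded distortion to only one of the two factors.
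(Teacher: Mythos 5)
Your proposal is correct and follows essentially the same route as the paper's proof: construct level-$N$ discrete probability measures on $I^{\infty}$, use the one-sided submultiplicativity estimate $\|D\phi_{\omega\tau,t}\|\ge K(t)^{-1}\|D\phi_{\omega,t}\|\cdot\|D\phi_{\tau,t}\|$ (spending only one factor of $K(t)$ by applying bounded distortion to a single factor in the chain rule), split the cylinder $[\omega]$ into sub-cylinders, combine the trivial upper bound in the numerator with the resulting lower bound on $Z_{N,t}(s)$ in the denominator, and pass to a weak-$\ast$ subsequential limit. You are slightly more explicit than the paper in deriving the key inequality and in noting that cylinders are clopen, but these are the same steps.
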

\begin{proof}
%Let $m$ be the Borel probability measure on $I^{\infty}$ such that for any $\omega\in I^{\ast}$ with $\omega=\omega_1\omega_2\cdots \omega_n$, $$m([\omega])=\frac{1}{\# I^{(1)}\# I^{(2)}\cdots \# I^{(n)}}=\frac{1}{\# I^n}.$$

Let $n\in \mathbb{N}$. For any $\omega\in I^n,$ take an element $\tau_{\omega}\in [\omega].$ %For any $t\in U$, $s\ge 0$% and $\omega\in I^n$, we define the Borel measure $\mu_{t, s, \omega}$ on $I^{\infty}$ as $$\mu_{t, s, \omega}(B)=\frac{||D\phi_{\omega, t}||^s}{Z_{n, t}(s)}\frac{m([\omega]\cap B)}{m([\omega])}$$ for any Borel subset $B$.
For any $t\in U$, $s\ge 0$ and $n\in \mathbb{N}$, we define a Borel probability measure $\mu_{t, s, n}$ on $I^{\infty}$ as $$\mu_{t, s, n}=\frac{1}{Z_{n, t}(s)}\sum_{\omega\in I^n}||D\phi_{\omega, t}||^s\delta_{\tau_{\omega}}.$$ 
Then $$\mu_{t, s, n}([\omega])=\frac{||D\phi_{\omega, t}||^s}{Z_{n, t}(s)}$$ for any $\omega\in I^n$. 

If  $\omega\in I^n$, $\upsilon\in I_{n+1}^{n+j}$ and $\tau=\omega\upsilon\in I^{n+j}$, then by the bounded distortion (\ref{bd}), $||D\phi_{\omega, t}||\cdot||D\phi_{\upsilon, t}||\le K(t)||D\phi_{\tau, t}||.$ Hence for any $j\in \mathbb{N}$,
\begin{align}
\label{zetto}
Z_{n+j, t}(s)\ge \frac{1}{K(t)^{s}}Z_{n, t}(s)\sum_{\upsilon\in I_{n+1}^{n+j}}||D\phi_{\upsilon, t}||^s.
\end{align}

Thus we have that for any $j\in \mathbb{N}$ and for any $\omega\in I^n,$
\begin{align*}
\mu_{t, s, n+j}([\omega])&=\mu_{t, s, n+j}\left(\bigcup_{\upsilon\in I_{n+1}^{n+j}}[\omega\upsilon]\right)\\
&=\frac{\sum_{\upsilon\in I_{n+1}^{n+j}}||D\phi_{\omega\upsilon, t}||^s}{Z_{n+j, t}(s)}\\
&\le ||D\phi_{\omega, t}||^s\frac{\sum_{\upsilon\in I_{n+1}^{n+j}}||D\phi_{\upsilon, t}||^s}{Z_{n+j, t}(s)}\\
&\le K(t)^{s}\frac{||D\phi_{\omega, t}||^s}{Z_{n, t}(s)}\\
&(\mbox{by\ }(\ref{zetto})).
\end{align*}
Let $\mu_{t, s}$ be a weak$^{\ast}-$limit of a subsequence of $\{\mu_{t, s, j}\}_{j=1}^{\infty}$ in the space of Borel probability measures on $I^{\infty}$ (see e.g. \cite[Theorem 6.5]{W}). 
The above inequality implies 

$$\mu_{t, s}([\omega])\le K(t)^{s}\frac{||D\phi_{\omega, t}||^s}{Z_{n, t}(s)}.$$

\end{proof}
For any $n\in \mathbb{N},$ we define the map $\sigma^n: I^{\infty}\rightarrow I_{n+1}^{\infty}$ by $\sigma^n (\omega_1\omega_2\cdots)=\omega_{n+1}\omega_{n+2}\cdots.$ This is a continuous map with respect to the product topology. We give the following simple lemma.
\begin{lem}
\label{lemshift}
Let $t\in U.$ Then for any $n\in \mathbb{N}$ and $\omega\in I^{\infty},$
$$\pi_{1, t}(\omega)=\phi_{{\omega|_n}, t} (\pi_{{n+1}, t}(\sigma^n (\omega))).$$
\end{lem}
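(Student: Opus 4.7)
The plan is to evaluate both sides by using the limit characterization of the address map provided in Remark \ref{rem1}(ii), namely that for any $m\in\mathbb{N}$ and any $\omega\in I_m^\infty$, $\pi_{m,t}(\omega)=\lim_{j\to\infty}\phi_{\omega|_j,t}(x)$ for any $x\in X$. The identity to be proved is essentially a shift-compatibility (semigroup) statement for the address maps, and the only thing to do is split the prefix $\omega|_j$ at position $n$ and pass the limit through the continuous finite composition $\phi_{\omega|_n,t}$.

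First I would fix an arbitrary $x\in X$ and write, by Remark \ref{rem1}(ii) applied with $m=1$,
\[
\pi_{1,t}(\omega)=\lim_{j\to\infty}\phi_{\omega|_j,t}(x).
\]
For $j>n$, the word $\omega|_j\in I^j$ decomposes as the concatenation of $\omega|_n\in I^n$ with $(\sigma^n\omega)|_{j-n}=\omega_{n+1}\cdots\omega_j\in I_{n+1}^{j}$, and by the definition of $\phi_{\,\cdot\,,t}$ on concatenated words we have
\[
\phi_{\omega|_j,t}=\phi_{\omega|_n,t}\circ\phi_{(\sigma^n\omega)|_{j-n},t}.
\]
Hence $\phi_{\omega|_j,t}(x)=\phi_{\omega|_n,t}\bigl(\phi_{(\sigma^n\omega)|_{j-n},t}(x)\bigr)$.

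Next, since $\sigma^n\omega\in I_{n+1}^\infty$, a second application of Remark \ref{rem1}(ii) (this time with $m=n+1$) gives
\[
\lim_{j\to\infty}\phi_{(\sigma^n\omega)|_{j-n},t}(x)=\pi_{n+1,t}(\sigma^n\omega).
\]
Because $\phi_{\omega|_n,t}$ is a fixed $C^1$ map on $V\supset X$ (in particular continuous at the point $\pi_{n+1,t}(\sigma^n\omega)\in X$), I can pass to the limit through it to conclude
\[
\pi_{1,t}(\omega)=\phi_{\omega|_n,t}\bigl(\pi_{n+1,t}(\sigma^n\omega)\bigr),
\]
which is the desired identity.

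There is no substantive obstacle: the lemma is a routine consequence of the inductive definition of $\phi_{\omega,t}$ on words and the continuity of finite compositions, combined with the limit description of the address maps. The only point requiring care is correct index bookkeeping when splitting $\omega|_j$ at position $n$ and recognising the tail as the $(j-n)$-th restriction of $\sigma^n\omega$.
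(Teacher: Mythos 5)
Your proof is correct, and it takes a somewhat different route from the paper's. The paper argues directly at the level of the defining nested intersection: it writes $\{\pi_{1,t}(\omega)\}=\bigcap_{j\ge 1}\phi_{\omega|_j,t}(X)$, discards the first $n$ terms of this decreasing nest, and then pulls the fixed map $\phi_{\omega|_n,t}$ outside the remaining intersection so that the inner set is recognised as $\{\pi_{n+1,t}(\sigma^n\omega)\}$; the interchange of $\phi_{\omega|_n,t}$ with the infinite intersection implicitly uses that $\phi_{\omega|_n,t}$ is injective (a conformal diffeomorphism onto its image). You instead work with the sequential-limit description of the address maps from Remark \ref{rem1}(ii), split the word $\omega|_j$ at position $n$ to factor $\phi_{\omega|_j,t}=\phi_{\omega|_n,t}\circ\phi_{(\sigma^n\omega)|_{j-n},t}$, identify the inner limit as $\pi_{n+1,t}(\sigma^n\omega)$, and pass the outer limit through $\phi_{\omega|_n,t}$ by continuity. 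The decomposition at position $n$ is the same in both; you replace the set-theoretic intersection manipulation (and the implicit injectivity point) with two invocations of the Remark plus continuity of the finite composition, which makes the index bookkeeping a bit more explicit at the cost of relying on the auxiliary limit characterisation rather than on the raw definition.
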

\begin{proof}
Let $t\in U.$ For any $n\in \mathbb{N}$ and $\omega\in I^{\infty},$ we have
\begin{align*}
\{\pi_{1, t}(\omega)\}&=\bigcap_{j=1}^{\infty}\phi_{\omega|_{j}, t}(X)\\
&=\bigcap_{j=n+1}^{\infty}\phi_{\omega|_{j}, t}(X)\\
&=\phi_{{\omega|_n}, t} \left(\bigcap_{j=1}^{\infty}\phi_{\sigma^n(\omega)|_{j}, t}(X)\right)\\
&=\phi_{{\omega|_n}, t} (\{\pi_{{n+1}, t}(\sigma^n (\omega)\}).
\end{align*}
\end{proof}
For any $\omega=\omega_1\omega_2\cdots, \tau=\tau_1\tau_2\cdots\in I^{\infty}$ with $\omega\neq \tau$ and $\omega_1=\tau_1$, we denote by $\omega\wedge \tau(\in I^{\ast})$ the largest common initial segment of $\omega$ and $\tau$. In order to prove Main Theorem A, we need the following which is the key lemma for the proof.
\begin{lem}
\label{lem4}
Let $\{\Phi_t\}_{t\in U}$ be a TNCIFS. Then for any $t_0\in U$ and any $\epsilon>0$, there exists $\delta=\delta(t_0, \epsilon)>0$ such that 
$$\dim_H(J_t)\ge \min\{m, s(t_0)\}-\frac{\epsilon}{2}$$
for $\mdl$- a.e. $t\in B(t_0, \delta)$.
\end{lem}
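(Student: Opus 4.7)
The plan is to use the classical potential-theoretic (energy) method combined with Fubini and the transversality estimate of Lemma \ref{lem2}. Fix $t_0\in U$ and $\epsilon>0$; I may assume $s^{\ast}:=\min\{m,s(t_0)\}-\epsilon/2>0$, as the conclusion is otherwise trivial. Set $s:=s^{\ast}+\epsilon/4=\min\{m,s(t_0)\}-\epsilon/4$, so $s<s(t_0)$ and therefore $\underline{P}_{t_0}(s)>0$ by definition of $s(t_0)$, and form the Gibbs-like measure $\mu:=\mu_{t_0,s}$ on $I^{\infty}$ via Lemma \ref{lem3}. The aim is to show that for $\mathcal{L}_d$-a.e.\ $t$ in a small ball around $t_0$, the $s^{\ast}$-energy of the push-forward $(\pi_{1,t})_{\ast}\mu$ on $J_t$ is finite; Frostman's theorem then yields $\dim_H(J_t)\ge s^{\ast}$.

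First I would invoke Lemma \ref{lem1} with $a=s^{\ast}$ and the given $\epsilon$ to produce $\delta>0$ such that
\[\|D\phi_{\omega,t}\|^{-s^{\ast}}\le\|D\phi_{\omega,t_0}\|^{-s}\qquad(\omega\in I^{\ast},\ t\in B(t_0,\delta)),\]
shrinking $\delta$ if needed so that $G:=\overline{B(t_0,\delta)}\subset U$, and setting $M:=\sup_{t\in G}K(t)<\infty$ (finite by continuity of $K$). The key quantity to bound is the total energy
\[E:=\int_G\iint\frac{d\mu(\omega)\,d\mu(\tau)}{|\pi_{1,t}(\omega)-\pi_{1,t}(\tau)|^{s^{\ast}}}\,d\mathcal{L}_d(t).\]
I would decompose the inner integral by the length $n=|\omega\wedge\tau|$ of the common initial segment: for $\eta\in I^n$ and $u\neq v\in I^{(n+1)}$, $(\omega,\tau)$ ranges over $[\eta u]\times[\eta v]$, and Lemma \ref{lemshift} combined with Lemma \ref{dist} gives
\[|\pi_{1,t}(\omega)-\pi_{1,t}(\tau)|\ge L^{-1}K(t)^{-2}\|D\phi_{\eta,t}\|\cdot|\pi_{n+1,t}(\sigma^n\omega)-\pi_{n+1,t}(\sigma^n\tau)|.\]
Since $\sigma^n\omega$ and $\sigma^n\tau$ differ in the first coordinate, Lemma \ref{lem2} with $\alpha=s^{\ast}<m$ bounds $\int_G|\pi_{n+1,t}(\sigma^n\omega)-\pi_{n+1,t}(\sigma^n\tau)|^{-s^{\ast}}\,d\mathcal{L}_d(t)$ by $\tilde{C}_{n+1}$.

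Applying Fubini, plugging in the Lemma \ref{lem1} inequality $\|D\phi_{\eta,t}\|^{-s^{\ast}}\le\|D\phi_{\eta,t_0}\|^{-s}$, the Gibbs-like bound $\mu([\eta])\le K(t_0)^s\|D\phi_{\eta,t_0}\|^s/Z_{n,t_0}(s)$, and the trivial inequality $\sum_{u\neq v}\mu([\eta u])\mu([\eta v])\le\mu([\eta])^2$, the exponents cancel neatly: the squared Gibbs factor $\|D\phi_{\eta,t_0}\|^{2s}$ and the distortion-continuity factor $\|D\phi_{\eta,t_0}\|^{-s}$ combine to $\|D\phi_{\eta,t_0}\|^{s}$, so summing over $\eta\in I^n$ reproduces $Z_{n,t_0}(s)$. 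I therefore expect
\[E\le(LM^2)^{s^{\ast}}K(t_0)^{2s}\sum_{n\ge 0}\frac{\tilde{C}_{n+1}}{Z_{n,t_0}(s)}.\]
Since $\underline{P}_{t_0}(s)>0$, $Z_{n,t_0}(s)$ grows exponentially, while $\tilde{C}_{n+1}$ grows subexponentially by Lemma \ref{lem2}; the series converges, $E<\infty$, and Fubini followed by Frostman's theorem yields the conclusion.

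The main obstacle is the tight bookkeeping of these exponents: the Lemma \ref{lem1} factor, the square of the Gibbs-like bound, and the transversality exponent $s^{\ast}$ must line up so that summation over $\eta\in I^n$ produces exactly the partition function $Z_{n,t_0}(s)$ at a value $s$ strictly smaller than $s(t_0)$ (where positivity of the lower pressure delivers exponential growth) yet still satisfying $s^{\ast}<s$ and $s^{\ast}<m$. The choices $s=\min\{m,s(t_0)\}-\epsilon/4$ and $s^{\ast}=\min\{m,s(t_0)\}-\epsilon/2$ are dictated precisely by these three simultaneous constraints, and any weakening breaks the cancellation.
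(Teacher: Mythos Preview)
Your proposal is correct and follows essentially the same route as the paper: the same choice of exponents $s^{\ast}=\min\{m,s(t_0)\}-\epsilon/2$ and $s=s^{\ast}+\epsilon/4$, the same Gibbs-like measure $\mu_{t_0,s}$, the same decomposition by $|\omega\wedge\tau|$, and the same chain Lemma~\ref{dist}/\ref{lemshift} $\to$ Lemma~\ref{lem1} $\to$ Lemma~\ref{lem2} $\to$ Lemma~\ref{lem3} leading to $\sum_n \tilde{C}_{n+1}/Z_{n,t_0}(s)<\infty$. The only cosmetic difference is that the paper applies the Gibbs-like bound once (replacing $\|D\phi_{\rho,t_0}\|^{-s}$ by $K(t_0)^s/(\mu([\rho])Z_{n,t_0}(s))$) and then uses $\mu_2(A_\rho)\le\mu([\rho])^2$, whereas you square the Gibbs bound directly; both yield the same $1/Z_{n,t_0}(s)$ after summation, and the paper additionally records explicitly that the diagonal $\{\omega=\tau\}$ is $\mu\times\mu$-null.
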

\begin{proof}
For any $t_0\in U,$ we set $s:=\min\{m, s(t_0)\}.$ We assume $s>0$, otherwise the statement holds. For any $0<\epsilon< 2s$, we set $$\eta=\frac{-\epsilon \log \gamma}{4\left(s-\frac{\epsilon}{2}\right)+\epsilon},$$ where $\gamma$ is the constant coming from the uniform contraction condition. Take $\delta=\delta(\eta, t_0)$ coming from the distortion continuity (\ref{dc}) ascribed to $\eta$ and $t_0$. 
By Lemma \ref{lem1}, for any $\omega\in I^{\ast}$ and $t\in B(t_0, \delta),$
\begin{align}
\label{lem42}
||D\phi_{\omega, t_0}||^{s-\frac{\epsilon}{4}}\le ||D\phi_{\omega, t}||^{s-\frac{\epsilon}{2}}.
\end{align}
Let $n\in \mathbb{N}$. For any $\rho\in I^n,$ we set 
\begin{align*}
&F:=\{(\omega, \tau)\in I^{\infty}\times I^{\infty}\ :\ \omega_1\neq\tau_1\},\\
&A_{\rho}:=\{(\omega, \tau)\in I^{\infty}\times I^{\infty}\ :\ \omega\wedge \tau=\rho\},\\
&H:=\{(\omega, \tau)\in I^{\infty}\times I^{\infty}\ :\ \omega=\tau\}.
\end{align*}
Then we have $I^{\infty}\times I^{\infty}=H \sqcup F\sqcup \bigsqcup_{n\ge 1} \bigsqcup_{\rho\in I^n} A_{\rho}$ (disjoint union). Let $\rho\in I^n.$
By Lemma \ref{lemshift} and Lemma \ref{dist}, there exists $L\ge 1$ such that for any $(\omega, \tau) \in A_{\rho}$ and $t\in U,$
\begin{align}
|\pi_{1, t}(\omega)-\pi_{1, t}(\tau)|&=|\phi_{\rho, t}(\pi_{{n+1}, t}(\sigma^n \omega))-\phi_{\rho, t}(\pi_{{n+1}, t}(\sigma^n \tau))| \notag \\
&\label{lem41}\ge L^{-1}K(t)^{-2}||D\phi_{\rho, t}||\cdot|\pi_{{n+1}, t}(\sigma^n \omega)-\pi_{{n+1}, t}(\sigma^n \tau)|.
\end{align}
Let $\mu=\mu_{t_0, s-\epsilon/4}$ be the Borel probability measure coming from Lemma \ref{lem3} ascribed to $t_0\in U$ and $s-\epsilon/4\ge 0.$ Since $\liminf_{n\to \infty}\frac{1}{n}\log Z_{n, t_0}(s-\frac{\epsilon}{4})>0,$ there exist $b>0$ and $n_0\in \mathbb{N}$ such that for all $n\ge n_0$, 
\begin{align}
\label{shisuu}
Z_{n, t_0}\left(s-\frac{\epsilon}{4}\right)> \exp(bn).
\end{align}
By (\ref{gibbslike}) and (\ref{shisuu}) we have for any $\omega\in I^{\infty},$ $\mu(\{\omega\})=0.$ Hence we obtain that 
\begin{align}
\label{diag}
(\mu\times \mu)(H)&=\int_{I^{\infty}}\mu\{\omega\in I^{\infty}\ :\ (\omega, \tau)\in H\}\ d\mu(\tau) \notag \\
&=\int_{I^{\infty}}\mu(\{\tau\})\ d\mu(\tau)=0.
\end{align} 
We set $\mu_2=\mu\times \mu$ and $$R(t):=\iint_{I^{\infty}\times I^{\infty}}\frac{1}{|\pi_{1, t}(\omega)-\pi_{1, t}(\tau)|^{s-\epsilon/2}}\ d\mu_2.$$%Note that the function $t\mapsto R(t)$ is Borel measurable on $U$ by the continuity condition in Definition \ref{tncifs}. 
For simplicity, we use the convention that\[I^0=\{\emptyset\}, A_{\emptyset}=F, [\emptyset]=I^{\infty}, \phi_{\emptyset, t}={\rm id}_{\mathbb{R}^m}, \text{and}\  \sigma^0={\rm id}_{I^{\infty}}, \]where ${\rm id}_A$ is the identity map on the set $A$.
Then
\begin{align*}
\int_{B(t_0, \delta)}R(t)\ d\mdl(t)&=\sum_{n\ge 0}\sum_{\rho\in I^n}\iint_{A_\rho}\left(\int_{B(t_0, \delta)}\frac{1}{|\pi_{1, t}(\omega)-\pi_{1, t}(\tau)|^{s-\epsilon/2}}\ dt\right)\ d\mu_2(\omega, \tau)\\
&(\mbox{by Fubini's Theorem and}\ (\ref{diag}))\\
&\le \sum_{n\ge 0}\sum_{\rho\in I^n}\iint_{A_\rho}\left(\int_{B(t_0, \delta)}\frac{L^{s-\epsilon/2}K(t)^{2(s-\epsilon/2)}||D\phi_{\rho, t}||^{-s+\epsilon/2}}{|\pi_{{n+1},t}(\sigma^n \omega)-\pi_{{n+1}, t}(\sigma^n \tau)|^{s-\epsilon/2}}\ dt\right)\ d\mu_2(\omega, \tau)\\
&(\mbox{by\ } (\ref{lem41}))\\
&\le L^{s-\epsilon/2}\left(\sup_{t\in B(t_0, \delta)}K(t)^{2(s-\epsilon/2)}\right)\sum_{n\ge 0}\tilde{C}_{n+1}\sum_{\rho\in I^n}\iint_{A_\rho}||D\phi_{\rho, t_0}||^{-s+\epsilon/4}\ d\mu_2(\omega, \tau)\\
&(\mbox{by\ }(\ref{lem42})\ \mbox{and}\ \mbox{Lemma}\ \ref{lem2})\\
&\le  L^{s-\epsilon/2}\left(\sup_{t\in B(t_0, \delta)}K(t)^{2(s-\epsilon/2)}\right)\sum_{n\ge 0}\tilde{C}_{n+1}\sum_{\rho\in I^n}\iint_{A_\rho}\frac{K(t_0)^{s-\epsilon/4}}{\mu([\rho])Z_{n, t_0}(s-\epsilon/4)}\ d\mu_2(\omega, \tau)\\
&(\mbox{by}\ \mbox{Lemma}\ \ref{lem3})\\
&= {\rm Const.}\sum_{n\ge 0} \frac{\tilde{C}_{n+1}}{Z_{n, t_0}(s-\epsilon/4)}\sum_{\rho\in I^n}\frac{1}{\mu([\rho])}\iint_{A_\rho}\ d\mu_2(\omega, \tau)\\
&\left(\mbox{we set}\ {\rm Const.}= L^{s-\epsilon/2}\left(\sup_{t\in B(t_0, \delta)}K(t)^{2(s-\epsilon/2)}\right)K(t_0)^{s-\epsilon/4}\right)\\
&\le{\rm Const.}\sum_{n\ge 0} \frac{\tilde{C}_{n+1}}{Z_{n, t_0}(s-\epsilon/4)}\\
&(\mbox{since}\ \mu_2({A_\rho})\le \mu([\rho])^2).
\end{align*}
Since $\frac{1}{n} \log \tilde{C}_{n+1}\to 0$ as $n\to \infty$, it follows from (\ref{shisuu}) that
$$\int_{B(t_0, \delta)}R(t)\ \mdl(t)\le {\rm Const.}\sum_{n\ge 0} \frac{\tilde{C}_{n+1}}{Z_{n, t_0}(s-\epsilon/4)}< \infty.$$ 
Hence we have that for $\mdl$-a.e. $t\in B(t_0, \delta),$
$$R(t)=\int\int_{\mathbb{R}^m\times \mathbb{R}^m}\frac{1}{|x-y|^{s-\epsilon/2}}\ d\left(\pi_{1, t}(\mu)\times \pi_{1, t}(\mu)\right)< \infty,$$
where $\pi_{1, t}(\mu)$ is the push forward measure of $\mu$ by $\pi_{1, t}.$ Since $\pi_{1, t}(\mu)\left(J_{t}\right)=1,$ by \cite[Theorem 4.13 (a)]{Fal} we have $$\dim_H(J_t)\ge \min\{m, s(t_0)\}-\frac{\epsilon}{2}$$
for $\mdl$- a.e. $t\in B(t_0, \delta)$.
\end{proof}
In order to obtain more deeper results about $J_t$ corresponding to the parameter $t$ with $s(t)>m$, we need the following. Note that the set $\{t\in U\ : s(t)>m\}$ is open by the continuity of $s(t)$ (see Proposition \ref{CB1}).
\begin{lem}
\label{lem5}
Let $\{\Phi_t\}_{t\in U}$ be a TNCIFS. Then for any $t_0\in \{t\in U\ : s(t)>m\}$, there exists $\delta=\delta(t_0)>0$ such that 
$$\mathcal{L}_m (J_t)>0$$
for $\mdl$- a.e. $t\in B(t_0, \delta)$.
\end{lem}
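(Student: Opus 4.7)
The plan is to imitate the transversality computation in the proof of Lemma~\ref{lem4}, but replace the Riesz energy $\int|\pi_{1,t}(\omega)-\pi_{1,t}(\tau)|^{-(s-\epsilon/2)}$ with the second-moment quantity $(\mu\times\mu)\{(\omega,\tau):|\pi_{1,t}(\omega)-\pi_{1,t}(\tau)|\le r\}/r^m$ and use it to prove absolute continuity of the push-forward. Since $s(t_0)>m$ gives $\underline{P}_{t_0}(m)>0$, there exist $b>0$ and $n_0\in\mathbb{N}$ with $Z_{n,t_0}(m)\ge e^{bn}$ for $n\ge n_0$. Fix $\eta>0$ with $m\eta<b/2$, take $\delta=\delta(\eta,t_0)$ from the distortion continuity condition~(\ref{dc}), and let $\mu:=\mu_{t_0,m}$ be the Gibbs-like measure from Lemma~\ref{lem3}. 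Write $\nu_t:=(\pi_{1,t})_\ast\mu$; since $\nu_t$ is a Borel probability measure with $\nu_t(J_t)=1$, it suffices to show that $\nu_t\ll\mathcal{L}_m$ for $\mdl$-a.e.\ $t\in B(t_0,\delta)$, as then $\mathcal{L}_m(J_t)\ge \mathcal{L}_m(\{d\nu_t/d\mathcal{L}_m>0\})>0$.

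I would deduce absolute continuity by proving that the lower $m$-density $\liminf_{r\to 0}\nu_t(B(x,r))/r^m$ is finite $\nu_t$-a.e.; any nontrivial singular part of $\nu_t$ would, by the classical Besicovitch density result, have infinite lower $m$-density at $\nu_t^s$-a.e.\ point of its support, so finiteness forces $\nu_t\ll\mathcal{L}_m$. Using the identity $\int\nu_t(B(x,r))\,d\nu_t(x)=(\mu\times\mu)\{(\omega,\tau):|\pi_{1,t}(\omega)-\pi_{1,t}(\tau)|\le r\}$ and Fatou's lemma twice (once in $x$, once in $t$), it is enough to check that
\[
\liminf_{r\to 0}\frac{1}{r^m}\int_{B(t_0,\delta)}(\mu\times\mu)\bigl\{|\pi_{1,t}(\omega)-\pi_{1,t}(\tau)|\le r\bigr\}\,d\mdl(t)<\infty.
\]
Then apply Fubini and decompose $I^\infty\times I^\infty=H\sqcup F\sqcup\bigsqcup_{n\ge 1}\bigsqcup_{\rho\in I^n}A_\rho$ as in Lemma~\ref{lem4}. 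For $(\omega,\tau)\in A_\rho$ with $|\rho|=n$, Lemma~\ref{lemshift} combined with Lemma~\ref{dist} and the distortion continuity bound underlying Lemma~\ref{lem1} give
\[
|\pi_{1,t}(\omega)-\pi_{1,t}(\tau)|\ge L^{-1}K(t)^{-2}e^{-n\eta}\|D\phi_{\rho,t_0}\|\cdot|\pi_{n+1,t}(\sigma^n\omega)-\pi_{n+1,t}(\sigma^n\tau)|,
\]
so the transversality condition at level $n+1$ produces
\[
\mdl\bigl\{t\in B(t_0,\delta):|\pi_{1,t}(\omega)-\pi_{1,t}(\tau)|\le r\bigr\}\le C'\,C_{n+1}\,e^{mn\eta}\|D\phi_{\rho,t_0}\|^{-m}r^m,
\]
where $C'$ absorbs $L^m$ and $\sup_{B(t_0,\delta)}K(t)^{2m}$.

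Using the Gibbs bound $(\mu\times\mu)(A_\rho)\le \mu([\rho])^2\le K(t_0)^{2m}\|D\phi_{\rho,t_0}\|^{2m}/Z_{n,t_0}(m)^2$ and summing over $\rho\in I^n$ produces $K(t_0)^{2m}/Z_{n,t_0}(m)$, whence the full integral is a constant multiple of $r^m\sum_{n\ge 0}C_{n+1}e^{mn\eta}/Z_{n,t_0}(m)$; this converges because $C_n$ is subexponential and $m\eta<b$. The $n=0$ contribution (with the conventions $\rho=\emptyset$, $A_\emptyset=F$, $\|D\phi_{\emptyset,t}\|=1$, $Z_{0,t_0}(m)=1$) fits the same formula. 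The main obstacle is the bookkeeping of distortion factors: the squared cylinder weight $\|D\phi_{\rho,t_0}\|^{2m}$ from $\mu([\rho])^2$ must leave exactly one clean factor $\|D\phi_{\rho,t_0}\|^{m}$ after canceling the $\|D\phi_{\rho,t_0}\|^{-m}$ from transversality, so that the sum telescopes into $Z_{n,t_0}(m)^{-1}$; once $\eta$ is pinned down relative to $b$, this is a straightforward distortion/Fubini chase.
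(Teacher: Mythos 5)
Your proposal is correct and follows the same overall architecture as the paper's proof: pass to the Gibbs-like measure $\mu$ from Lemma~\ref{lem3}, write the lower $m$-density integral as a double integral over $I^\infty\times I^\infty$, use the decomposition $H\sqcup F\sqcup\bigsqcup_\rho A_\rho$, apply Lemma~\ref{lemshift} and Lemma~\ref{dist} to reduce to the $(n+1)$-st address map, invoke the transversality condition, and sum. The only substantive variation is bookkeeping: the paper chooses $\epsilon>0$ with $s(t_0)>m(1+\epsilon/4)$, takes $\mu=\mu_{t_0,\,m(1+\epsilon/4)}$, and uses Lemma~\ref{lem1} (with its tailored $\eta$) to absorb the distortion error into the exponent, arriving at $\sum_n C_{n+1}/Z_{n,t_0}(m(1+\epsilon/4))$. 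You instead keep $\mu=\mu_{t_0,m}$, use the raw distortion continuity bound $\|D\phi_{\rho,t}\|\ge e^{-n\eta}\|D\phi_{\rho,t_0}\|$, and track the resulting $e^{mn\eta}$ factor explicitly, arriving at $\sum_n C_{n+1}e^{mn\eta}/Z_{n,t_0}(m)$; choosing $\eta$ small relative to the exponential growth rate $b$ of $Z_{n,t_0}(m)$ makes this converge. Both routes are valid; yours is slightly more transparent about why the extra factor is harmless (it is an explicit subexponential nuisance dominated by $Z_n$), while the paper's is a cleaner formulaic cancellation via Lemma~\ref{lem1}. Your identity $\int\nu_t(B(x,r))\,d\nu_t(x)=(\mu\times\mu)\{|\pi_{1,t}(\omega)-\pi_{1,t}(\tau)|\le r\}$ followed by Fatou is exactly the paper's passage from $\mathcal{S}$ to the transversality sum, and your appeal to the Besicovitch density theorem is the same fact the paper cites as \cite[2.12 Theorem]{M}. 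One small point: you only need $m\eta<b$, not $m\eta<b/2$, but that is harmless. You should also note explicitly that $\mu_2(H)=0$ follows from $Z_{n,t_0}(m)\ge e^{bn}$ and the Gibbs bound (\ref{gibbslike}), which is needed to drop $H$ from the decomposition; the paper records this as (\ref{diag}).
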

\begin{proof}
Fix $t_0\in \{t\in U\ : s(t)>m\}.$ Take $\epsilon>0$ such that $s(t_0)>m(1+\epsilon/4)$ and set \[\eta=\frac{-\epsilon \log \gamma}{4+\epsilon}.\] 
Take $\delta=\delta(\eta, t_0)$ coming from the distortion continuity (\ref{dc}) ascribed to $\eta$ and $t_0$. 
By Lemma \ref{lem1}, for any $\omega\in I^{\ast}$ and $t\in B(t_0, \delta),$
\begin{align}
\label{LEM1}
||D\phi_{\omega, t_0}||^{1+\epsilon/4}\le ||D\phi_{\omega, t}||.
\end{align}
Let $\mu=\mu_{t_0, m(1+\epsilon/4)}$ be the Borel probability measure coming from Lemma \ref{lem3} ascribed to $t_0\in U$ and $m(1+\epsilon/4).$
It suffices to show that the push forward measure $\pi_{1, t}(\mu)$ of $\mu$ by $\pi_{1, t}$ is absolutely continuous with respect to $\mathcal{L}_m$ for $\mdl$- a.e. $t\in B(t_0, \delta)$. In order to do that we set \[\mathcal{S}:=\int_{B(t_0, \delta)}\int_{\mathbb{R}^m} \liminf_{r\to 0}\frac{\pi_{1, t}(\mu)(B(x, r))}{\mathcal{L}_m(B(x, r))}\ d\pi_{1, t}(\mu)(x)d\mathcal{L}_d(t).\]We remark that if $\mathcal{S}<\infty,$ then by \cite[2.12 Theorem]{M} we have $\pi_{1, t}(\mu)$ is absolutely continuous with respect to $\mathcal{L}_m$ for $\mdl$- a.e. $t\in B(t_0, \delta)$. We set $\mu_2=\mu\times \mu$ and denote the $m-$ dimensional Lebesgue measure of the unit ball by $b_m.$ We use the notations and convention introduced in the proof of Lemma \ref{lem4}. Then 
\begin{align*}
\mathcal{S}&\le \liminf_{r\to 0}b_m^{-1}r^{-m}\iint_{I^{\infty}\times I^{\infty}}\mathcal{L}_d(\{t\in B(t_0, \delta)\ :\ |\pi_{1, t}(\omega)-\pi_{1, t}(\tau)|<r\})\ d\mu_2(\omega, \tau)\\
&(\mbox{by Fatou's Lemma and Fubini's Theorem})\\
&=\liminf_{r\to 0}b_m^{-1}r^{-m}\sum_{n\ge 0}\sum_{\rho\in I^n}\iint_{A_\rho}\mathcal{L}_d(\{t\in B(t_0, \delta)\ :\ |\pi_{1, t}(\omega)-\pi_{1, t}(\tau)|<r\})\ d\mu_2(\omega, \tau)\\
&(\mbox{by (\ref{diag}}))\\
&\le \liminf_{r\to 0}b_m^{-1}r^{-m}\\
&\sum_{n\ge 0}\sum_{\rho\in I^n}\iint_{A_\rho}\mathcal{L}_d(\{t\in B(t_0, \delta)\ :\ |\pi_{n+1, t}(\sigma^{n}\omega)-\pi_{n+1, t}(\sigma^n\tau)|<rL K(t)^2||D\phi_{\rho, t}||^{-1}\})\ d\mu_2(\omega, \tau)\\
&(\mbox{by (\ref{lem41})})\\
&\le \liminf_{r\to 0}b_m^{-1}r^{-m}\sum_{n\ge 0}\sum_{\rho\in I^n}C_{n+1}r^m L^m \left(\sup_{t\in B(t_0, \delta)} K(t)\right)^{2m}||D\phi_{\rho, t_0}||^{-m(1+\epsilon/4)}\ \mu_2(A_{\rho})\\
&(\mbox{by (\ref{LEM1}) and the transversality condition})\\
&\le \liminf_{r\to 0}b_m^{-1}r^{-m}\sum_{n\ge 0}\sum_{\rho\in I^n}C_{n+1}r^m L^m \left(\sup_{t\in B(t_0, \delta)} K(t)\right)^{2m}\frac{K(t_0)^{m(1+\epsilon/4)}}{\mu([\rho])Z_{n, t_0}(m(1+\epsilon/4))} \mu_2(A_{\rho})\\
&(\mbox{by Lemma \ref{lem3}})\\
&\le {\rm Const.}\sum_{n\ge 0}\frac{C_{n+1} }{Z_{n, t_0}(m(1+\epsilon/4))}\ \left(\mbox{we set}\ {\rm Const.}=b_m^{-1}L^m \left(\sup_{t\in B(t_0, \delta)} K(t)\right)^{2m}K(t_0)^{m(1+\epsilon/4)}\right).
\end{align*}
Since $\frac{1}{n} \log {C}_{n+1}\to 0$ as $n\to \infty$, it follows from (\ref{shisuu}) that
$$\mathcal{S}\le {\rm Const.}\sum_{n\ge 0} \frac{{C}_{n+1}}{Z_{n, t_0}(m(1+\epsilon/4))}< \infty.$$ 
\end{proof}
\subsection{Proof of Main Theorem A}
The following is Main Theorem A.
\begin{thm}
\label{thm1}
Let $\{\Phi_t\}_{t\in U}$ be a TNCIFS. 
Then 
\begin{enumerate}
\item[(i)] \[\dim_H(J_t)= \min\{m, s(t)\}\ \text{for}\ \mdl-\text{a.e.}\ t\in U;\]

\item[(ii)] \[\mathcal{L}_m(J_t)>0\ \text{for}\ \mdl-\text{a.e.}\ t\in \{t\in U\ : s(t)>m\}.\]
\end{enumerate}
\end{thm}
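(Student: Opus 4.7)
The plan is to patch together the pointwise-local lower bounds of Lemma \ref{lem4} and Lemma \ref{lem5} with a straightforward upper bound and a Lindelöf covering argument, using the continuity of the Bowen dimension (Proposition \ref{CB1}) to compare $s(t)$ with $s(t_0)$ on small neighborhoods.

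First, I would establish the upper bound $\dim_H(J_t)\le \min\{m, s(t)\}$ for \emph{every} $t\in U$. The bound by $m$ is trivial since $J_t\subset \mathbb{R}^m$. For the bound by $s(t)$, fix $s>s(t)$; then $\underline{P}_t(s)<0$, so there exist $c>0$ and a subsequence $n_k\to \infty$ with $Z_{n_k,t}(s)\le e^{-c n_k}$. The cover $J_t\subset\bigcup_{\omega\in I^{n_k}}\phi_{\omega,t}(X)$ has pieces of diameter at most $K(t)|X|\cdot \|D\phi_{\omega,t}\|$ by the mean value inequality together with the bounded distortion condition (\ref{bd}), and these diameters are uniformly at most $|X|\gamma^{n_k}\to 0$ by uniform contraction. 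Summing,
\[
\mathcal{H}^s_{|X|\gamma^{n_k}}(J_t)\;\le\;(K(t)|X|)^s\, Z_{n_k,t}(s)\;\longrightarrow 0,
\]
so $\mathcal{H}^s(J_t)=0$ and hence $\dim_H(J_t)\le s$; letting $s\downarrow s(t)$ gives the claim.

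Second, for the lower bound in (i), I would fix $\epsilon>0$ and, for each $t_0\in U$, use Proposition \ref{CB1} to find $\delta_1(t_0,\epsilon)>0$ with $|s(t)-s(t_0)|\le \epsilon/2$ on $B(t_0,\delta_1)\cap U$ (with the obvious convention when $s(t_0)=\infty$, namely that $s(t)\ge 2m$ on a small enough neighborhood, making $\min\{m,s(t)\}=m=\min\{m,s(t_0)\}$). Combine this with the $\delta(t_0,\epsilon)$ from Lemma \ref{lem4} and set $\delta_0=\min\{\delta,\delta_1\}$; then for $\mathcal{L}_d$-a.e.\ $t\in B(t_0,\delta_0)$,
\[
\dim_H(J_t)\;\ge\;\min\{m,s(t_0)\}-\tfrac{\epsilon}{2}\;\ge\;\min\{m,s(t)\}-\epsilon.
\]
Since $U\subset\mathbb{R}^d$ is Lindelöf, I can extract a countable subcover $\{B(t_i,\delta_0^{(i)})\}_{i\ge 1}$ of $U$ from the open cover $\{B(t_0,\delta_0(t_0,\epsilon))\}_{t_0\in U}$, and the union of the corresponding $\mathcal{L}_d$-null "bad" sets is still null. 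Applying this for $\epsilon=1/n$ and intersecting the resulting full-measure sets yields $\dim_H(J_t)\ge \min\{m,s(t)\}$ for $\mathcal{L}_d$-a.e.\ $t\in U$, which with the upper bound gives (i).

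Third, for (ii), the set $\{t\in U:s(t)>m\}$ is open by Proposition \ref{CB1}, hence Lindelöf. I would cover it by countably many balls $B(t_i,\delta(t_i))$ of the type produced by Lemma \ref{lem5}, on each of which $\mathcal{L}_m(J_t)>0$ for $\mathcal{L}_d$-a.e.\ $t$; taking the union of null exceptional sets gives the result. The main conceptual obstacle is a bookkeeping one—making sure the continuity of $s(\cdot)$ (including at points where $s(t_0)=\infty$) interacts cleanly with the $\min\{m,\cdot\}$ operation so that the $t_0$-dependent lower bound from Lemma \ref{lem4} becomes a $t$-dependent lower bound after taking $\epsilon\to 0$; everything else is a routine Lindelöf covering combined with the two key local lemmas already established.
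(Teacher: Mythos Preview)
Your proposal is correct. The core ingredients---the upper bound, Lemma~\ref{lem4}, Lemma~\ref{lem5}, and Proposition~\ref{CB1}---are exactly those the paper uses, and your handling of the $s(t_0)=\infty$ case is fine. The only difference lies in the local-to-global step. The paper argues by contradiction: if the a.e.\ lower bound failed, the ``bad'' set $\{t:\dim_H(J_t)<\tilde s(t)-\epsilon\}$ would have positive measure, hence a Lebesgue density point $t_0$; continuity of $\tilde s$ then converts this into positive measure for $\{t\in B(t_0,\delta):\dim_H(J_t)<\tilde s(t_0)-\epsilon/2\}$, contradicting Lemma~\ref{lem4}. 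You instead run a direct Lindel\"of covering: choose for each $t_0$ a ball on which both the continuity estimate and the conclusion of Lemma~\ref{lem4} hold, extract a countable subcover, and take the countable union of null exceptional sets. Your route is slightly more elementary (it avoids the Lebesgue density theorem) and constructive; the paper's density-point argument is a bit shorter on the page. Two cosmetic remarks: the $K(t)$ factor in your diameter bound is unnecessary since $X$ is convex and $\|D\phi_{\omega,t}\|$ is already the supremum over $X$; and the paper simply cites \cite[2.8 Lemma]{RU} for the upper bound rather than reproving it.
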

\begin{proof}
By \cite[2.8 Lemma]{RU}, for any $t\in U$ we have $$\dim_H(J_t)\le \tilde{s}(t):=\min\{m, s(t)\}.$$ Hence it suffices to prove that $$\dim_H(J_t)\ge \tilde{s}(t)$$
for $\mdl$- a.e. $t\in U$. Suppose that this is not true. Then there exist $\epsilon>0$ and a Lebesgue density point $t_0\in U$ of the set
$$\{t\in U\ :\ \dim_H(J_t)< \tilde{s}(t)-\epsilon\}.$$ Then there exists $\delta_0>0$ such that for each $0<\delta< \delta_0$,
\begin{align}
\label{thm11}
\mathcal{L}_d(\{t\in B(t_0, \delta)\ : \dim_H(J_t)< \tilde{s}(t)-\epsilon\})> 0.
\end{align}
By the continuity of the function $\tilde{s}(t)$ (see Proposition \ref{CB1}), if $\delta$ is small enough then $\tilde{s}(t)< \tilde{s}(t_0)+\epsilon/2$ for all $t\in B(t_0, \delta).$ Thus for all $\delta$ sufficiently small we obtain from (\ref{thm11}) that 
\begin{align*}
\mathcal{L}_d(\{t\in B(t_0, \delta)\ : \dim_H(J_t)< \tilde{s}(t_0)-\epsilon/2\})> 0.
\end{align*}
This contradicts Lemma \ref{lem4} and completes the proof of the first part of our theorem. The second part follows from Lemma \ref{lem5} in a similar way.
\end{proof}

\section{Example}
%\subsection{Lemma for the transversality condition}
In this section, we give a proof of Main Theorem B. We set $\mathbb{D}:=\{z\in \mathbb{C}\ :\ |z|< 1\}$. For any holomorphic function $f$ on $\mathbb{D}$, we denote by $f^{\prime}(z)$ the complex derivative of $f$ evaluated at $z\in \mathbb{D}$. We can prove the following slight variation of \cite[Lemma 5.2]{S2}. %For the reader's convenience we include the proof in Appendix.
\begin{lem}
\label{cmpttrans1}
Let $\mathcal{H}$ be a compact subset of the space of holomorphic functions on $\mathbb{D}$ endowed with the compact open topology. We set $$\tilde{\mathcal{M}}_{\mathcal{H}}:=\{\la\in \mathbb{D}\ :\mbox{there exists}\  f\in \mathcal{H}\ {\rm such\ that}\ f(\la)=f^{\prime}(\la)=0\}.$$ Let $G$ be a compact  subset of $\mathbb{D}\backslash \tilde{\mathcal{M}}_{\mathcal{H}}.$ Then there exists $K=K(\mathcal{H}, G)>0$ such that for any $f\in \mathcal{H}$ and any $r>0,$
\begin{align}
\label{bound}
\m2l\left(\{\lambda\in G : |f(\lambda)|\le r\}\right)\le Kr^2.
\end{align}
\end{lem}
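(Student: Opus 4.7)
The plan is to reduce the statement to a quantitative Taylor-expansion argument, with two uniform constants extracted from the compactness of $\mathcal{H}$ and of $G$. First I would establish the following uniform non-degeneracy estimate: there exist constants $c>0$ and $\delta_0>0$ such that for every $f\in\mathcal{H}$ and every $\lambda\in G$ with $|f(\lambda)|\le \delta_0$ one has $|f'(\lambda)|\ge c$. This is obtained by contradiction: assuming a sequence $(f_n,\lambda_n)\in\mathcal{H}\times G$ with $f_n(\lambda_n)\to 0$ and $f_n'(\lambda_n)\to 0$, compactness of $\mathcal{H}$ in the compact-open topology allows us to extract a limit $f\in\mathcal{H}$ and $\lambda\in G$ such that $f(\lambda)=f'(\lambda)=0$ (using that compact-open convergence of holomorphic functions implies locally uniform convergence of derivatives via Cauchy's formula), contradicting $\lambda\in G\subset \mathbb{D}\setminus\tilde{\mathcal{M}}_{\mathcal{H}}$.

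Next I would fix a compact set $G'$ with $G\subset\mathrm{int}(G')\subset G'\subset \mathbb{D}$ and set $M:=\sup\{|f''(z)|:f\in\mathcal{H},\ z\in G'\}<\infty$, again finite by compactness of $\mathcal{H}$ (Cauchy's estimate for $f''$ from uniform bounds on $f$ over a slightly larger compact). Let $\rho_0>0$ be small enough that $B(\lambda_0,\rho_0)\subset G'$ for every $\lambda_0\in G$. The key local estimate is then: if $\lambda_0\in G$ satisfies $|f(\lambda_0)|\le r$, Taylor's formula gives
\begin{equation*}
|f(\lambda)|\ge |f'(\lambda_0)|\,|\lambda-\lambda_0|-|f(\lambda_0)|-\tfrac{M}{2}|\lambda-\lambda_0|^2\ge c|\lambda-\lambda_0|-r-\tfrac{M}{2}|\lambda-\lambda_0|^2
\end{equation*}
for $\lambda\in B(\lambda_0,\rho_0)$, provided $r\le \delta_0$. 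Choosing $\rho_1:=\min\{\rho_0,c/(2M)\}$, for $\lambda\in B(\lambda_0,\rho_1)$ the quadratic term is bounded by $(c/4)|\lambda-\lambda_0|$, so $|f(\lambda)|\ge (3c/4)|\lambda-\lambda_0|-r$. Consequently $\{\lambda\in B(\lambda_0,\rho_1):|f(\lambda)|\le r\}\subset B(\lambda_0,8r/(3c))$, a disk of Lebesgue measure $\le 64\pi r^2/(9c^2)$.

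Finally I would cover the compact set $G$ by finitely many balls $B(\lambda_i,\rho_1/2)$, $i=1,\dots,N$, with $N=N(G,\rho_1)$ uniform in $f$. For each $i$, if $\{\lambda\in G\cap B(\lambda_i,\rho_1/2):|f(\lambda)|\le r\}$ is nonempty, pick $\lambda_0$ in it; the preceding estimate applied at $\lambda_0$ (noting $B(\lambda_i,\rho_1/2)\subset B(\lambda_0,\rho_1)$) gives $\mathcal{L}_2(\{\lambda\in G\cap B(\lambda_i,\rho_1/2):|f(\lambda)|\le r\})\le 64\pi r^2/(9c^2)$. Summing over $i$ yields (\ref{bound}) with $K=64\pi N/(9c^2)$ for $r\le\delta_0$, and the case $r>\delta_0$ is handled trivially by bounding the left-hand side by $\mathcal{L}_2(G)\le (\mathcal{L}_2(G)/\delta_0^2)r^2$ and enlarging $K$ accordingly. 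The main obstacle is the uniform non-degeneracy step: one must carefully exploit compactness in the compact-open topology together with the fact that $G$ avoids the closed (compact) set $\tilde{\mathcal{M}}_{\mathcal{H}}$ inside $\mathbb{D}$; everything else is routine local inverse-function-type bookkeeping.
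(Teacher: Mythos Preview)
The paper does not actually supply a proof of this lemma; it only states it as ``a slight variation of \cite[Lemma 5.2]{S2}'' and refers the reader to Solomyak's paper. Your argument is correct and is precisely the standard route to such transversality estimates (and is in the same spirit as Solomyak's original proof): a compactness extraction to get a uniform lower bound $|f'(\lambda)|\ge c$ whenever $|f(\lambda)|$ is small, followed by a second-order Taylor bound and a finite cover of $G$. One cosmetic point: in defining $\rho_1=\min\{\rho_0,c/(2M)\}$ you should note the trivial case $M=0$ (all $f\in\mathcal{H}$ affine), where one simply takes $\rho_1=\rho_0$ and the quadratic term vanishes. Otherwise the bookkeeping is clean and complete.
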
 

%\subsection{Some planar sets with digits tending to $0$}
We now give a family $\{\Phi_t\}_{t\in U}$ of parametrized systems such that $\{\Phi_t\}_{t\in U}$ is a TNCIFS but $\Phi_t$ does not satisfy the open set condition (\ref{osc}) for any $t\in U$. In order to do that, we set $$U:=\{t\in \mathbb{C}\ :\ |t|< 2\times 5^{-5/8},\ t\notin \mathbb{R}\}.$$ Note that $2\times 5^{-5/8}\approx 0.73143>1/\sqrt{2}.$ Let $t\in U$. For each $j\in \mathbb{N},$ we define $$\Phi^{(j)}_t=\{z\mapsto\phi^{(j)}_{1, t}(z), z \mapsto\phi^{(j)}_{2, t}(z)\}:=\left\{z \mapsto t z, z\mapsto tz+\frac{1}{j}\right\}.$$
\begin{pro}
\label{notosc}
For any $t\in U$, the system $\{\Phi^{(j)}_t\}_{j=1}^{\infty}$ does not satisfy the open set condition.
\end{pro}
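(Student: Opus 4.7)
The plan is to exhibit, for each $t \in U$, a specific index $j \in \mathbb{N}$ and an explicit point of $\mathbb{C}$ lying in both $\phi^{(j)}_{1,t}(\mathrm{int}(X))$ and $\phi^{(j)}_{2,t}(\mathrm{int}(X))$, thereby violating the OSC at that step. Since OSC is a universal statement over all $j$, a single such $j$ suffices.

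First I would write the two images down explicitly. Setting $R := 1/(1 - 2\cdot 5^{-5/8})$, the set $X$ is the closed Euclidean disk of radius $R$ about the origin, so $\mathrm{int}(X)$ is the corresponding open disk. Each $\phi^{(j)}_{i,t}$ is an affine similarity with linear part $z\mapsto tz$ (contraction ratio $|t|$), hence
\[
\phi^{(j)}_{1,t}(\mathrm{int}(X)) = \{w \in \mathbb{C}: |w| < |t|R\}, \qquad \phi^{(j)}_{2,t}(\mathrm{int}(X)) = \{w \in \mathbb{C}: |w - 1/j| < |t|R\},
\]
i.e.\ two open disks of the same radius $|t|R$ with centers $0$ and $1/j$.

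Next I would choose $j$ so that the two disks overlap. Two open disks of equal radius $|t|R$ and centers at distance $1/j$ intersect precisely when $1/j < 2|t|R$. Since $t \in U$ implies $|t|>0$ and since $R>0$, the Archimedean property supplies an integer $j$ with $j > 1/(2|t|R)$. For such $j$, the point $w_0 := 1/(2j)$ satisfies $|w_0 - 0| = 1/(2j) < |t|R$ and $|w_0 - 1/j| = 1/(2j) < |t|R$, so $w_0 \in \phi^{(j)}_{1,t}(\mathrm{int}(X)) \cap \phi^{(j)}_{2,t}(\mathrm{int}(X))$. Hence $\Phi_t$ fails the OSC at this step.

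I do not anticipate any real obstacle: the whole argument reduces to solving a single linear inequality in $j$. The only detail worth double-checking is that the choice of $R$ built into the paper does make $\mathrm{int}(X)$ a genuine open Euclidean disk in $\mathbb{C}\cong\mathbb{R}^2$, which is immediate from the definition, and that the chosen $j$ is allowed to depend on $t$ (which it may, since the OSC requires $\phi^{(j)}_{1,t}(\mathrm{int}(X))\cap\phi^{(j)}_{2,t}(\mathrm{int}(X)) = \emptyset$ for every $j$, so exhibiting even one offending $j$ per $t$ is enough).
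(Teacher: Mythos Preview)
Your proof is correct and follows essentially the same idea as the paper: both arguments observe that the images of (a ball contained in) $\mathrm{int}(X)$ under $\phi^{(j)}_{1,t}$ and $\phi^{(j)}_{2,t}$ are disks of a fixed positive radius whose centers lie at distance $1/j$, so for $j$ large they must overlap. The only cosmetic difference is that the paper phrases the contradiction for an arbitrary compact $X$ with nonempty interior (choosing a ball $B(x,r)\subset\mathrm{int}(X)$ and deducing $2|t|r<1/j$ for all $j$), whereas you specialize to the explicit disk $X$ and exhibit the common point $1/(2j)$; the underlying mechanism is identical.
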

\begin{proof}
Suppose that the system $\{\Phi^{(j)}_t\}_{j=1}^{\infty}$ satisfies the open set condition (\ref{osc}). Then there exists a compact subset $X\subset \mathbb{C}$ with ${\rm int}(X)\neq \emptyset$ such that $\phi^{(j)}_{1, t}({\rm int}(X))\cap \phi^{(j)}_{2, t}({\rm int}(X))=\emptyset.$ Hence there exist $x\in X$ and $r>0$ such that 
\begin{align*}
\phi^{(j)}_{1, t}(B(x, r))\cap \phi^{(j)}_{2, t}(B(x, r))=B(tx, |t|r)\cap B(tx+1/j, |t|r)=\emptyset.
\end{align*}
In particular, we have for all $j\in \mathbb{N},$ $$2|t|r< \frac{1}{j}.$$
This is a contradiction.
\end{proof}
We set $$X:=\left\{z\in \mathbb{C}\ :\ |z|\le \frac{1}{1-2\times 5^{-5/8}}\right\}.$$ Then we have that for any $t\in U$, for any $j\in \mathbb{N}$ and for any $i\in I^{(j)}:=\{1, 2\},$ $\phi^{(j)}_{i, t} (X)\subset X.$ We set $b^{(j)}_{1}=0$ and $b^{(j)}_{2}=1/j$ for each $j$. Let $n, j\in \mathbb{N}$. We give the following lemma.
\begin{lem}
\label{comp1}
Let $t\in U$. For any $\omega=\omega_{n}\cdots \omega_{n+j-1}\in I_{n}^{n+j-1}$ and any $z\in X$ we have
\begin{align*}
\phi_{\omega, t}(z)=\phi_{\omega_{n}, t}^{(n)}\circ\cdots\circ \phi_{\omega_{n+j-1}, t}^{(n+j-1)}(z)=t^{j}z+ \sum_{i=1}^{j}b^{(n+i-1)}_{\omega_{n+i-1}} t^{i-1} ,
\end{align*}
where $b^{(n+i-1)}_{\omega_{n+i-1}}\in \{0, \frac{1}{n+i-1}\}.$ In particular, for any $\omega=\omega_{n}\cdots \omega_{n+j-1}\cdots\in I_{n}^{\infty},$ $$\pi_{n, t}(\omega)=\sum_{i=1}^{\infty} b^{(n+i-1)}_{\omega_{n+i-1}} t^{i-1}.$$
\end{lem}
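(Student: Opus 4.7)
The lemma is an explicit computation that rests on the fact that every map $\phi^{(k)}_{i,t}$ in the system is affine of the form $z \mapsto tz + b^{(k)}_i$ with a \emph{common} linear coefficient $t$. I would prove the finite-composition formula by induction on the length $j$, and then deduce the series representation of $\pi_{n,t}(\omega)$ as a limit using Remark \ref{rem1}(ii).

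For the base case $j=1$, the identity $\phi^{(n)}_{\omega_n,t}(z) = tz + b^{(n)}_{\omega_n}$ matches the claimed formula. For the inductive step, I would strip off the outermost map by writing $\phi_{\omega,t} = \phi^{(n)}_{\omega_n,t} \circ \phi_{\omega_{n+1}\cdots \omega_{n+j-1},t}$, apply the inductive hypothesis to the inner composition (which has length $j-1$ and starts at index $n+1$) to obtain $t^{j-1} z + \sum_{i=1}^{j-1} t^{i-1} b^{(n+i)}_{\omega_{n+i}}$, and then apply $\phi^{(n)}_{\omega_n,t}(w) = tw + b^{(n)}_{\omega_n}$. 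A shift of summation index $i \mapsto i+1$, together with absorbing the new constant $b^{(n)}_{\omega_n}$ as the $i=1$ term, yields the required expression $t^j z + \sum_{i=1}^{j} t^{i-1} b^{(n+i-1)}_{\omega_{n+i-1}}$.

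For the infinite case, I fix any $x \in X$. By Remark \ref{rem1}(ii) we have $\pi_{n,t}(\omega) = \lim_{j\to\infty} \phi_{\omega|_j,t}(x)$, and the first part of the lemma supplies the explicit form $\phi_{\omega|_j,t}(x) = t^j x + \sum_{i=1}^{j} t^{i-1} b^{(n+i-1)}_{\omega_{n+i-1}}$. Since $|t| < 2 \cdot 5^{-5/8} < 1$ and $X$ is bounded, the term $t^j x$ tends to $0$; and because $|b^{(k)}_{i}| \le 1$ for all $k$ and $i$, the series $\sum_{i=1}^{\infty} t^{i-1} b^{(n+i-1)}_{\omega_{n+i-1}}$ converges absolutely by comparison with the geometric series $\sum |t|^{i-1}$. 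Passing to the limit yields the displayed formula for $\pi_{n,t}(\omega)$.

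There is no genuine obstacle here: the argument is purely algebraic, the induction is one line once the correct re-indexing of the summation is set up, and the convergence of the tail is immediate from $|t| < 1$ and the boundedness of the $b^{(k)}_i$.
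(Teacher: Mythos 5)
Your proof is correct and follows exactly the route the paper indicates: the paper's proof of Lemma~\ref{comp1} is the one-line remark ``This can be shown by induction on $j$ and Remark~\ref{rem1}~(ii),'' and your argument supplies precisely those details (induction on the length with the reindexing $i\mapsto i+1$, then passing to the limit via Remark~\ref{rem1}~(ii) using $|t|<1$ and the boundedness of $X$ and of the $b^{(k)}_i$). No discrepancy with the paper's approach.
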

\begin{proof}
This can be shown by induction on $j$ and Remark \ref{rem1} (ii).
\end{proof}
We can show that the family $\{\Phi_t\}_{t\in U}$ of systems is a TNCIFS as follows. 
\begin{enumerate}
\item[1.] {\em Conformality} : Let $t\in U$. For any $j\in \mathbb{N}$ and any $i\in I^{(j)}$, $\phi^{(j)}_{i, t}(z)=t z+b^{(j)}_{i}$ is a similarity map on $\mathbb{C}$.
\item[2.] {\em Uniform Contraction} : We set $\gamma=2\times 5^{-5/8}.$ Then for any $\omega\in I_{n}^{n+j-1}$ and $z\in X$,
\begin{align*}
|D\phi_{\omega, t}(z)|= |t|^j\le \gamma^j
\end{align*}
by Lemma \ref{comp1}.
\item[3.] {\em Bounded distortion} : By Lemma \ref{comp1}, for any $\omega=\omega_{n}\cdots \omega_{n+j-1}\in I_{n}^{n+j-1}$ and $z\in \mathbb{C},$ $|D\phi_{\omega, t}(z)|=|t|^{j}.$ 
We define the Borel measurable locally bounded function $K: U\rightarrow [1, \infty)$ by $K(t)=1$. Then for any $\omega\in I_{n}^{n+j-1}$, 

\begin{align*}
|D\phi_{\omega, t}(z_1)|\le K(t)|D\phi_{\omega, t}(z_2)|
\end{align*}
for all $z_1,z_2\in \mathbb{C}$.
\item[4.] {\em Distortion continuity} : Fix $t_0\in U$. Since the map $t\mapsto \log|t|$ is continuous at $t_0\in U$, for any $\eta>0$ there exists $\delta=\delta(\eta, t_0)>0$ such that for any $t\in U$ with $|t_0-t|<\delta$, 
$$|\log|t_0|-\log|t||<\eta.$$
Hence we have 
$$|\log |t_0|^j/|t|^j|< j\eta.$$
Thus we have that for any $\omega\in I_{n}^{n+j-1},$
\begin{align*}
\exp(-j\eta)< \frac{||D\phi_{\omega, t_0}||}{||D\phi_{\omega, t}||}=\exp(\log |t_0|^j/|t|^j)<\exp(j\eta).
\end{align*}
\item[5.] {\em Continuity} : By Lemma \ref{comp1}, we have for any $t\in U$ and any $\omega\in I_{n}^{\infty},$ 
$$\pi_{n, t}(\omega)=\sum_{i=1}^{\infty} b^{(n+i-1)}_{\omega_{n+i-1}} t^{i-1}.$$ Hence the map $(t, \omega)\mapsto \pi_{n, t}(\omega)$ is continuous on $U\times I_n^{\infty}$.%For any $\epsilon>0$ and $x\in X$, there exists $\delta>0$ such that for any $\omega=\omega_{n}\cdots \omega_{n+j-1}\in I_{n}^{n+j-1}$ and any $t_1, t_2\in U$ with $|t_1-t_2|< \delta,$
%$$|\phi_{\omega_{n}, t_1}^{(n)}\circ\cdots\circ \phi_{\omega_{n+j-1}, t_1}^{(n+j-1)}(x)-\phi_{\omega_{n}, t_2}^{(n)}\circ\cdots\circ \phi_{\omega_{n+j-1}, t_2}^{(n+j-1)}(x)|< \epsilon.$$
%By (\ref{comp1}), we have for any $t_1, t_2\in U$, $\omega\in I_{n}^{n+j-1}$ and $z\in X$ 
%\begin{align*}
%|\phi_{\omega, t_1}(z)-\phi_{\omega, t_2}(z)|&=|t_1^{j}z+ \sum_{i=1}^{j}t_1^{i-1} b_{\omega_{n+i-1}}-t_2^{j}z+ \sum_{i=1}^{j}t_2^{i-1} b_{\omega_{n+i-1}}|\\
%&=|(t_1^j-t_2^j)z+\sum_{i=1}^j(t_1^{i-1}-t_2^{i-1}) b_{\omega_{n+i-1}}|\\
%&=\sum_{i=1}^{j+1}K|t_1^{i-1}-t_2^{i-1}|\\
%&=\sum_{i=1}^{j+1}K |t_1-t_2|(|t_1|^{i-2}+|t_1|^{i-3}|t_2|+\cdots+|t_1| |t_2|^{i-3}+|t_2|^{i-2})\\
%&\le \sum_{i=1}^{j+1}(i-1)K |t_1-t_2|.
%\end{align*}
\item[6.] {\em Transversality condition} : 
We introduce a set $\mathcal{G}$ of holomorphic functions on $\mathbb{D}$ and the set $\tilde{\mathcal{M}}_{\mathcal{G}}$ of double zeros in $\mathbb{D}$ for functions belonging to $\mathcal{G}$.
\begin{align*}
&\mathcal{G}:=\left\{f(t)=\pm 1+\sum_{j=1}^{\infty}a_j t^j : a_j\in [-1, 1]\right\},\\
%&\mathcal{O}_{\infty}:=\{\la\in \mathbb{D}^{\ast}\ :\mbox{there exists}\  f\in \mathcal{G}\ {\rm such\ that}\ f(\la)=0\},\\
&\tilde{\mathcal{M}}_{\mathcal{G}}:=\{t\in \mathbb{D}\ :\mbox{there exists}\  f\in \mathcal{G}\ {\rm such\ that}\ f(t)=f^{\prime}(t)=0\}.
\end{align*}
Note that $\mathcal{G}$ is a compact subset of the space of holomorphic functions on $\mathbb{D}$ endowed with the compact open topology. Let $n\in \mathbb{N}$.
Then we have for any $t\in U$ and any $\omega, \tau\in I_n^{\infty}$ with $\omega_n\neq \tau_n,$
\begin{align*}
\pi_{n, t}(\omega)-\pi_{n, t}(\tau)&=\sum_{i=1}^{\infty}b^{(n+i-1)}_{\omega_{n+i-1}} t^{i-1}-\sum_{i=1}^{\infty}b^{(n+i-1)}_{\tau_{n+i-1}} t^{i-1}\\
&=b^{(n)}_{\omega_n}-b^{(n)}_{\tau_n}+\sum_{i=2}^{\infty}\left( b^{(n+i-1)}_{\omega_{n+i-1}}-b^{(n+i-1)}_{\tau_{n+i-1}}\right) t^{i-1}\\
&=\frac{1}{n}\left(\pm 1+\sum_{i=2}^{\infty}n\left( b^{(n+i-1)}_{\omega_{n+i-1}}-b^{(n+i-1)}_{\tau_{n+i-1}}\right) t^{i-1}\right).
\end{align*}
Then the function $t\mapsto \pm 1+\sum_{i=2}^{\infty}n( b^{(n+i-1)}_{\omega_{n+i-1}}-b^{(n+i-1)}_{\tau_{n+i-1}}) t^{i-1}$ is a holomorphic function which belongs to $\mathcal{G}.$ Let $G\subset \mathbb{D}\backslash \tilde{\mathcal{M}}_{\mathcal{G}}$ be  a compact subset. By Lemma \ref{cmpttrans1}, there exists $K=K(\mathcal{G}, G)>0$ such that for any $\omega,\tau\in I_n^{\infty}$ with $\omega_n\neq \tau_n$ and any $r>0$,   

\begin{align*}
&\mathcal{L}_2(\{t\in G\ :\ |\pi_{n, t}(\omega)-\pi_{n, t}(\tau)|\le r\})\\&=\mathcal{L}_2(\{t\in G\ :\ |\pm 1+\sum_{i=2}^{\infty}n( b^{(n+i-1)}_{\omega_{n+i-1}}-b^{(n+i-1)}_{\tau_{n+i-1}}) t^{i-1}|\le nr\})\\
&\le K(nr)^2.
\end{align*}
If we set $C_n:=Kn^2$ for any $n\in \mathbb{N},$ we have
$$\mathcal{L}_2(\{t\in G\ :\ |\pi_{n, t}(\omega)-\pi_{n, t}(\tau)|\le r\})\le C_n r^2 $$
and
$$\frac{1}{n}\log C_n=\frac{1}{n}\log {K}+\frac{2}{n}\log n \to 0$$ as $n\to \infty$.

Finally, we use the following theorem.
\begin{thm}\cite[Proposition 2.7]{SX}
\label{bbbp}
A power series of the form $1+\sum_{j=1}^{\infty}a_j z^j,$ with $a_j\in [-1, 1],$ cannot have a non-real double zero of modulus less than $2\times 5^{-5/8}$. 
\end{thm}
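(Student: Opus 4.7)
The plan is to combine Jensen's formula at a suitably chosen radius with Parseval's identity. First I would exploit the reality of the coefficients: if $\lambda$ is a non-real double zero of $f(z)=1+\sum_{j\geq1}a_j z^j$, then so is $\bar\lambda$ (since the $a_j$ are real), so $f$ has at least four zeros, counted with multiplicity, in $\{|z|<r\}$ for every $r$ with $|\lambda|<r<1$.

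Applying Jensen's formula at such $r$, using $|f(0)|=1$ and the fact that any additional zeros contribute nonnegatively, I obtain
\[4\log(r/|\lambda|) \le \frac{1}{2\pi}\int_0^{2\pi}\log|f(re^{i\theta})|\,d\theta.\]
To bound the right-hand side, the naive $L^\infty$ estimate $|f(z)|\le 1/(1-|z|)$ turns out to be too weak. Instead I would use Parseval's identity together with Jensen's inequality for the concave function $\log$:
\[\frac{1}{2\pi}\int_0^{2\pi}\log|f(re^{i\theta})|\,d\theta \le \frac{1}{2}\log\left(\frac{1}{2\pi}\int_0^{2\pi}|f(re^{i\theta})|^2\,d\theta\right) = \frac{1}{2}\log\sum_{j\geq 0}a_j^2 r^{2j} \le \frac{1}{2}\log\frac{1}{1-r^2}.\]

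Combining the two inequalities yields $r^8(1-r^2)\le |\lambda|^8$ for every admissible $r$. Substituting $u=r^2$ and maximizing $u^4(1-u)$ over $u\in(0,1)$ via elementary calculus gives the optimum $u=4/5$, at which $u^4(1-u)=(4/5)^4\cdot(1/5)=2^8/5^5$. Hence $|\lambda|^8\ge 2^8/5^5$, i.e., $|\lambda|\ge 2\cdot 5^{-5/8}$, as required. The optimal radius $r=2/\sqrt{5}$ lies in $(0,1)$, so the power series converges on the chosen circle and all manipulations are justified; if instead $|\lambda|\ge 2/\sqrt{5}$ the conclusion is immediate.

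The main obstacle, in my view, is not any individual manipulation but rather recognizing that the sharp constant $2\cdot 5^{-5/8}$ requires the $L^2$ (Parseval) bound instead of the naive sup-norm bound. The factor $1/2$ arising from concavity effectively doubles the exponent in the final inequality from $4$ to $8$, which is precisely what transforms the weaker bound $4\cdot 5^{-5/4}\approx 0.535$ (obtained from the $L^\infty$ estimate) into the stated $2\cdot 5^{-5/8}\approx 0.731$. Once this observation is in place, the remaining ingredients are standard: Jensen's formula, Jensen's inequality for $\log$, and a one-variable optimization.
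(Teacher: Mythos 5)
The paper states this result as a citation to \cite[Proposition 2.7]{SX} and gives no proof of its own, so there is nothing internal to compare against; I can only evaluate your argument on its merits. Your proof is correct and is essentially the standard proof of this bound (going back to Solomyak--Xu and, earlier, Beaucoup--Borwein--Boyd--Pinner): the reality of the coefficients forces $\bar\lambda$ to be a second double zero, Jensen's formula at radius $r$ with $|f(0)|=1$ then gives $4\log(r/|\lambda|)\le\frac{1}{2\pi}\int_0^{2\pi}\log|f(re^{i\theta})|\,d\theta$, the concavity of $\log$ combined with Parseval and $|a_j|\le 1$ bounds the right side by $\tfrac12\log\frac{1}{1-r^2}$, and optimizing $r^8(1-r^2)$ at $r^2=4/5$ yields $|\lambda|^8\ge 2^8/5^5$. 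Your side remarks are also correct: the $L^\infty$ bound $|f|\le(1-r)^{-1}$ only gives $4\cdot 5^{-5/4}$, and the non-reality of $\lambda$ is essential, since a real double zero contributes only two zeros to Jensen's count and the same computation then yields the weaker bound $(4/27)^{1/4}$. One should note explicitly (as you do) that the optimizing radius $r=2/\sqrt5\approx 0.894$ lies in $(0,1)$ and exceeds $2\cdot 5^{-5/8}\approx 0.731$, so either $|\lambda|<2/\sqrt5$ and the optimization applies, or $|\lambda|\ge 2/\sqrt5$ and the conclusion is immediate.
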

By using above theorem, we have that $U=\{t\in \mathbb{C}\ :\ |t|< 2\times 5^{-5/8},\ t\notin \mathbb{R}\}\subset \mathbb{D}\backslash \tilde{\mathcal{M}}_{\mathcal{G}}.$ Hence the family $\{\Phi_t\}_{t\in U}$ satisfies the transversality condition.

By the above arguments, we obtain the following.
\begin{pro}
\label{extncifs}
The family  $\{\Phi_t\}_{t\in U}$ of parametrized systems is a TNCIFS.
\end{pro}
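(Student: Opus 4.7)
The plan is to verify each of the six conditions in Definition \ref{tncifs} using the explicit composition formula from Lemma \ref{comp1}: for any $\omega\in I_n^{n+j-1}$, the map $\phi_{\omega,t}$ is an affine similarity $z\mapsto t^j z + c_{\omega,t}$ whose derivative $|D\phi_{\omega,t}(z)|=|t|^j$ is independent of $z$. This immediately yields conformality (condition 1), uniform contraction with $\gamma=2\times 5^{-5/8}$ since $|t|<\gamma$ on $U$ (condition 2), and bounded distortion with the trivial choice $K(t)\equiv 1$ (condition 3).

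For distortion continuity (condition 4) I would use that $\|D\phi_{\omega|_j,t_0}\|_X/\|D\phi_{\omega|_j,t}\|_X=(|t_0|/|t|)^j$ and exploit continuity of $t\mapsto \log|t|$ on $U$. The continuity condition (condition 5) follows from the series representation $\pi_{n,t}(\omega)=\sum_{i=1}^{\infty} b^{(n+i-1)}_{\omega_{n+i-1}}t^{i-1}$ (Lemma \ref{comp1}), whose partial sums are polynomials in $t$ and whose tail is bounded uniformly in $\omega$ by a geometric series in $|t|$; this gives uniform convergence on products of compact subsets of $U$ with $I_n^{\infty}$, hence joint continuity.

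The main obstacle is condition 6, the transversality estimate. The key algebraic step, for $\omega,\tau\in I_n^{\infty}$ with $\omega_n\neq\tau_n$, is to factor $1/n$ out of the address difference to get
\[
\pi_{n,t}(\omega)-\pi_{n,t}(\tau)=\frac{1}{n}\,f_{\omega,\tau}(t),\qquad f_{\omega,\tau}(t)=\pm 1+\sum_{i=2}^{\infty} n\bigl(b^{(n+i-1)}_{\omega_{n+i-1}}-b^{(n+i-1)}_{\tau_{n+i-1}}\bigr)\,t^{i-1}.
\]
The coefficients obey $|n(b^{(n+i-1)}_{\omega_{n+i-1}}-b^{(n+i-1)}_{\tau_{n+i-1}})|\le n/(n+i-1)\le 1$, so $f_{\omega,\tau}\in\mathcal{G}$. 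To invoke Lemma \ref{cmpttrans1} for the compact family $\mathcal{G}$, I need any compact $G\subset U$ to avoid the double-zero locus $\tilde{\mathcal{M}}_{\mathcal{G}}$; this is precisely where Theorem \ref{bbbp} intervenes, ruling out non-real double zeros of modulus less than $2\times 5^{-5/8}$ and thereby placing $U\subset \mathbb{D}\setminus\tilde{\mathcal{M}}_{\mathcal{G}}$. Lemma \ref{cmpttrans1} then produces a constant $K=K(\mathcal{G},G)>0$ with $\mathcal{L}_2(\{t\in G:|f_{\omega,\tau}(t)|\le nr\})\le K(nr)^2$, and undoing the $1/n$ rescaling yields the required bound with $C_n:=Kn^2$, for which $(\log C_n)/n\to 0$. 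Combining conditions 1--6 completes the verification.
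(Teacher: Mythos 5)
Your proposal is correct and follows essentially the same route as the paper: the explicit affine form $\phi_{\omega,t}(z)=t^jz+c_{\omega,t}$ gives conditions 1--5 directly, and for transversality you perform the same $1/n$ rescaling into the compact family $\mathcal{G}$, invoke Lemma \ref{cmpttrans1}, and use Theorem \ref{bbbp} to place $U\subset\mathbb{D}\setminus\tilde{\mathcal{M}}_{\mathcal{G}}$, arriving at $C_n=Kn^2$. The only additions are the explicit verification that the rescaled coefficients lie in $[-1,1]$ and a remark on uniform convergence of the tail, both of which the paper leaves implicit.
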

\subsection*{Acknowledgement.} 
The author would like to express his gratitude to an anonymous referee for the careful reading, valuable comments and the improvements of Main Theorem A. The author also would like to express his gratitude to Professor Hiroki Sumi for his valuable comments. 
%We calculate the lower pressure function $\lpf_{t}$ for $\Phi_t$, $t\in U$ as the following. For any $s\in [0, \infty),$

%\begin{align*}
%\lpf_{t}(s)&=\liminf\limits_{n\rightarrow \infty}\frac{1}{n}\log \sum_{\omega\in I^n}||D\phi_{\omega, t}||^s\\
%&=\liminf\limits_{n\rightarrow \infty}\frac{1}{n}\log \sum_{\omega\in I^n}|t|^{ns}\\
%&=\liminf\limits_{n\rightarrow \infty}\frac{1}{n}\log (2^n|t|^{ns})\\
%&=\log 2+s\log |t|.
%\end{align*}

%Hence for each $t\in U,$ $\lpf_{t}(s)$ has the zero $$s(t)=\frac{\log2}{-\log |t|}$$ and the function $t\mapsto s(t)$ is continuous on $U$. Let $J_t$ be the (1st) limit set corresponding to $t$. Then by Theorem \ref{thm1}, we have $$\dim_H(J_t)=\min\{2, s(t)\}=s(t)$$ for a.e. $t\in \{t\in \mathbb{C}\ :\ |t|\le1/\sqrt{2}, t\notin \mathbb{R}\}$ and $$\dim_H(J_t)=\min\{2, s(t)\}=2$$ for a.e. $t\in \{t\in \mathbb{C}\ :\ 1/\sqrt{2}\le|t|<2\times 5^{-5/8}, t\notin \mathbb{R}\}.$
\end{enumerate}

\end{document}